\def\th@plain{%
  \thm@notefont{}
  \itshape 
}
\def\th@definition{%
  \thm@notefont{}
  \normalfont 
}
\setlist{noitemsep}
\theoremstyle{definition}
\newtheorem{definition}{Definition}[section] 
\numberwithin{definition}{section}
\newtheorem{example}[definition]{Example}
\newtheorem{remark}[definition]{Remark}
\theoremstyle{plain}
\newtheorem{lemma}[definition]{Lemma} 
\newtheorem{theorem}[definition]{Theorem}
\newtheorem{corollary}[definition]{Corollary}
\newcommand{\N}{\mathbb{N}}
\newcommand{\Z}{\mathbb{Z}}
\newcommand{\R}{\mathbb{R}}
\newcommand{\C}{\mathbb{C}}
\newcommand{\abs}[1]{\lvert #1 \rvert} 
\newcommand{\norm}[1]{\left\lVert #1 \right\rVert}
\newcommand{\powerset}[1]{\mathcal{P}(#1)}
\newcommand{\el}{l}
\DeclareMathOperator{\Exists}{\exists}
\DeclareMathOperator{\Forall}{\forall}
\DeclareMathOperator{\id}{\mathrm{id}}
\renewcommand{\phi}{\varphi}
\renewcommand{\epsilon}{\varepsilon}
\renewcommand{\iff}{\Leftrightarrow}
\title{A survey of Kolmogorov quotients}
\author{Teemu Pirttimäki\footnote{tealpi@utu.fi} \\[1cm] Department of Mathematics and Statistics \\ University of Turku}
\begin{document}

\maketitle

\begin{abstract}
Every topological space has a Kolmogorov quotient that is obtained by identifying topologically indistinguishable points, that is, points that are contained in exactly the same open sets. In this survey, we look at the relationship between topological spaces and their Kolmogorov quotients. In most natural examples of spaces, the Kolmogorov quotient is homeomorphic to the original space. A non-trivial relationship occurs, for example, in the case of pseudometric spaces, where the Kolmogorov quotient is a metric space. We also look at the topological indistinguishability relation in the context of topological groups and uniform spaces. The author is grateful to professor Tero Harju for his help and discussions on the topic.
\end{abstract}

\section{Introduction}

A \emph{Kolmogorov quotient} $X/{\equiv}$ is obtained from a topological space $X$ by identifying points $x$ and $y$ if and only if they have exactly the same open neighbourhoods. Such points are topologically indistinguishable; there is no sequence of operations on open sets that would give a set $A$ such that $x \in A$ and $y \not \in A$. Nothing topologically important to the space $X$ is lost in identifying these points.

The resulting space is a \emph{$T_0$-space}: a space where all points are topologically distinguishable. Most topological spaces of interest are $T_0$. A $T_0$-space is, arguably, aesthetically more pleasing than a space that is not $T_0$. In a $T_0$-space, every point serves a purpose. When studying the topology of $X$, there seems to be no reason to keep useless, superfluous points around.

The construction of the Kolmogorov quotient is simple, intuitive, and can be carried out for any topological space. If a mathematician comes across a space that is not naturally $T_0$, the unnecessary points can be left out from the space right at the beginning and the original space forgotten. Perhaps for this reason, the construction is not even mentioned in most textbooks on topology, and where it is mentioned, this is done very briefly, and proofs are generally omitted.

However, there are situations where it is inconvenient if a space is $T_0$. Such a situation occurs when one is interested in refinements of the topology: the more points there are in $X$, the more choices there are for refinements. The same is true for subspaces, though the loss here is not so dramatic: for each subspace $S \subseteq X$ that we lose, $X/{\equiv}$ retains a subspace homeomorphic to $S/{\equiv}$. Still, if one is interested in the specific points of the space, one might not wish to clump them together in equivalence classes.

Removing the $T_0$-property from a space can generate new properties for topological spaces. Given a property $P$ (for example, the Hausdorff separation axiom $T_2$) of a $T_0$-space we obtain a new property $P'$ by defining: a space $X$ has the property $P'$ if and only if $X/{\equiv}$ has the property $P$. Generally the arising property is interesting in itself and admits a more direct definition. In a similar vein, given a structure $S$ (for example, a metric) on a $T_0$-space we can define: a space $X$ has the structure $S'$ if and only if $X/{\equiv}$ has the property $S$.

This survey is not about $T_0$-spaces, but focuses rather on the relationship between spaces and their Kolmogorov quotients. It appears that no comprehensive treatment on the matter has been published, and as stated before, standard textbooks often omit the construction entirely. As our sources don't usually give proofs, it seems unnecessary to cite each theorem individually. Various results presented here can be found without proofs in \cite{Caicedo}--\cite{HAF}. The notes in \cite{Clark} contain some proofs. We add here many details missing from these sources and present the results in a more general form when possible.

The author's interest in the subject was sparked by study of abstract model theory, specifically the paper \cite{Caicedo} by X. Caicedo, where Kolmogorov quotients are used in a topological proof of Lindström's theorem.

Kolmogorov spaces apparently get their name from an unpublished manu\-script by Andrey Kolmogorov (see \cite{Hofmann}, p.~1).

\section{Preliminaries}

In this section, we present notions that are required to understand the main results, but not directly connected to Kolmogorov quotients.

Given a topological space $X$ and a subset $A \subseteq X$, we write $A^c$ for the complement $X \setminus A$ and $\overline{A}$ for the closure of $A$. The powerset of a set $S$ is denoted by $\powerset S$.

\begin{definition}
Let $(X, \tau)$ be a topological space. The \emph{Borel algebra} of $X$ is a collection $\Sigma_X$ of subsets of $X$, defined inductively:
\begin{enumerate}
\item $\tau \subseteq \Sigma_X$.
\item If $\Delta \subseteq \Sigma_X$ is countable, then $\bigcup_{B \in \Delta} B \in \Sigma_X$.
\item If $B \in \Sigma_X$, then $B^c \in \Sigma_X$.
\end{enumerate}
The sets contained in $\Sigma_X$ are called the \emph{Borel sets} of $X$.
\end{definition}

By De Morgan's laws, intersections of countably many Borel sets are Borel sets. Hence also the relative complements $B_1 \setminus B_2 = B_1 \cap B^c_2$ are Borel sets.

\begin{example}
Let $a$ be an arbitrary real number. By rule 1, $\R$ is a Borel set, and the open intervals $(-\infty, a)$ and $(a, \infty)$ are Borel sets. By rule 3, $(-\infty, a] = \R \setminus (a, \infty)$ and $[a, \infty) = \R \setminus (-\infty, a)$  are Borel sets. Their intersection $\{a\} = (-\infty, a] \cap [a, \infty)$ is a Borel set. Since $a$ was arbitrary, all countable sets of reals are Borel sets by rule 2.
\end{example}

Since the Borel algebra is closed with respect to finite unions and intersections, and contains the complements $X \setminus B$ for all $B \in \Sigma_X$, it is a Boolean algebra.

\begin{definition}
Let $X$ be a set. A collection $\mathcal{F}$ of subsets of $X$ is a \emph{filter} on $X$ if
\begin{enumerate}
\item[F1.] $X \in \mathcal{F}$;
\item[F2.] $A \in \mathcal{F}$ and $A \subseteq B \subseteq X$ implies $B \in \mathcal{F}$;
\item[F3.] $A, B \in \mathcal{F}$ implies $A \cap B \in \mathcal F$.
\end{enumerate}
A filter $\mathcal F$ is \emph{proper} if $\mathcal{F} \neq \powerset X$. If there is no proper filter on $X$ that includes $\mathcal{F}$ as a proper subset, we say that $\mathcal{F}$ is an \emph{ultrafilter}.
\end{definition}

\begin{example}
Let $X$ be a topological space, and let $x \in X$. A set $S \subseteq X$ is a \emph{neighbourhood} of the point $x$ if $x \in U \subseteq S$ for some open set $U$. For each $x \in X$, denote by $\mathcal{N}(x)$ the collection of neighbourhoods of $x$. Then $\mathcal{N}(x)$ is a filter: First, $X$ is a neighbourhood of $x$. Second, if $x \in U \subseteq A$, where $U$ is open, then $B$ is a neighbourhood of $x$ for all $B \subseteq X$ such that $A \subseteq B$. Third, if $x \in U \subseteq A$ and $x \in V \subseteq B$, where $U$ and $V$ are open, then $U \cap V$ is open and $x \in U \cap V \subseteq A \cap B$, so $A \cap B$ is a neighbourhood of $x$.
\end{example}

\begin{example}
Let $X$ be a set and $x \in X$. The set $\mathcal{U}_x = \{S \in \powerset X \mid x \in S\}$ is an ultrafilter: First, $x \in X$. Second, if $x \in A$ and $A \subseteq B$, then $x \in B$. Third, if $x \in A$ and $x \in B$, then $x \in A \cap B$. We have shown that $\mathcal{U}_x$ is a filter. To show that it is an ultrafilter, suppose that $\mathcal{U}_x$ is a proper subset of a filter $\mathcal{F}$. Then $\mathcal{F}$ contains some set $S \subseteq X$ such that $x \not \in S$. By F3, $\emptyset = S \cap \{x\} \in \mathcal{F}$. Since $\emptyset \subseteq B$ for all $B \subseteq X$, we have $\mathcal F = \powerset X$ by F2.
\end{example}

\begin{definition}
Let $X$ be a topological space and $\mathcal{F}$ a filter on $X$. If $\mathcal{N}(x) \subseteq \mathcal{F}$ for some $x \in X$, then we say that $\mathcal{F}$ \emph{converges to} $x$. A point $x \in X$ is a \emph{cluster point} of $\mathcal{F}$ if there exists a proper filter that includes $\mathcal{F} \cup \mathcal{N}(x)$ as a subset.
\end{definition}

In every topological space, $\mathcal{N}(x)$ converges to $x$ and has $x$ as a cluster point. The same is true for $\mathcal{U}_x$, since $\mathcal N(x) \subseteq \mathcal U_x$.

\begin{definition}
A \emph{directed set} is a pair $(D, \geq)$, where $D$ is a set and $\geq$ a partial order on $D$ such that for all $i,j \in D$ there exists an element $k \in D$ for which $k \geq i$ and $k \geq j$. A \emph{net} in a topological space $X$ is a map from a directed set $D$ to $X$. A net $f$ \emph{converges to} $x \in X$ if for every $U \in \mathcal{N}(x)$, there exists an element $i_0 \in D$ such that $f(i) \in U$ for all $i \geq i_0$.
\end{definition}

\begin{remark} \label{net remark}
Every sequence is a net where $D = \N$ and $\geq$ is the usual order relation on $\N$. Filter and net convergence are equivalent (\cite{HAF}, pp. 158--160). For this reason, we will restrict ourselves to using only filters.
\end{remark}

\section{Kolmogorov quotients}

Let $X$ be a topological space. We define an equivalence relation ${\equiv} \subseteq X^2$ by letting $x \equiv y$ if and only if every open neighbourhood of $x$ is an open neighbourhood of $y$ and vice versa. If $x \equiv y$, we say that the points $x$ and $y$ are \emph{topologically indistinguishable}. Otherwise $x$ and $y$ are \emph{topologically distinguishable}, and we write $x \not \equiv y$. A space where all pairs of distinct points are topologically distinguishable is called a \emph{$T_0$-space} or a \emph{Kolmogorov space}. Most spaces studied by mathematicians are $T_0$.

\begin{example}
A space with the trivial topology is not $T_0$, unless it has less than two points.
\end{example}

\begin{example}
All Hausdorff spaces are $T_0$. This includes all discrete spaces and the space $\R$ with the euclidean topology.
\end{example}

\begin{example}
Let $X = \{0,1\}$ and $\tau = \{\emptyset, \{1\}, \{0,1\}\}$. The \emph{Sierpiński space} $(X, \tau)$ is $T_0$ but not Hausdorff.
\end{example}

\begin{example}
The product of $\R$ with the euclidean topology and $\R$ with the trivial topology is not $T_0$: indeed, the points $(1,0)$ and $(1,1)$ are topologically indistinguishable.
\end{example}

We will see more examples later. In the meanwhile, the following lemma should provide intuition into topological indistinguishability.

\begin{lemma} \label{indistinguishability equivalences}
Let $X$ be a topological space and $x, y \in X$. The following statements are equivalent:
\begin{enumerate}[label = \rm{(\roman*)}]
\item $x \equiv y$;
\item $\mathcal N(x) = \mathcal N(y)$;
\item $x$ and $y$ are contained in the same basic open sets;
\item $x$ and $y$ are contained in the same subbasic open sets;
\item $x$ and $y$ are contained in the same open sets;
\item $x$ and $y$ are contained in the same closed sets;
\item $\overline{\{x\}} = \overline{\{y\}}$;
\item $x$ and $y$ are contained in the same Borel sets;
\item a filter or net that converges to $x$ converges also to $y$, and vice versa;
\item a filter or net that has $x$ as a cluster point has also $y$ as a cluster point, and vice versa.
\end{enumerate}
\end{lemma}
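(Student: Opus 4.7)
The plan is to take statement (v) as a hub and cluster the other nine statements around it, in three waves: the ``purely set-theoretic'' equivalences (i)--(vii), the Borel case (viii), and the filter/convergence reformulations (ix)--(x).

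First I would dispatch the equivalences among (i)--(vii) by short direct arguments. The equivalence (i) $\iff$ (v) is just the definition of $\equiv$ unfolded. For (ii) $\iff$ (v), I would use that $N \in \mathcal{N}(z)$ iff there exists an open $U$ with $z \in U \subseteq N$, so if (v) holds the criterion is symmetric in $x,y$, and conversely any open set containing $z$ is itself a neighbourhood of $z$. The chain (iii) $\iff$ (iv) $\iff$ (v) follows because every open set is a union of basic open sets and every basic open set is a finite intersection of subbasic ones, so membership in one kind of open set is controlled by membership in the others. For (v) $\iff$ (vi) I would pass to complements, and then (vi) $\iff$ (vii) follows from the fact that $\overline{\{z\}}$ is the smallest closed set containing $z$: if (vi) holds, $\overline{\{x\}}$ is a closed set containing $y$, so it contains $\overline{\{y\}}$, and symmetrically; if (vii) holds, any closed $C \ni x$ contains $\overline{\{x\}} = \overline{\{y\}} \ni y$.

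For (viii), the direction (viii) $\implies$ (v) is immediate since open sets are Borel. The converse (v) $\implies$ (viii) calls for induction on the inductive construction of $\Sigma_X$: let $\mathcal{B} = \{B \in \Sigma_X : x \in B \iff y \in B\}$. By (v) this contains $\tau$, and one checks trivially that $\mathcal{B}$ is closed under complementation and countable unions, so $\mathcal{B} = \Sigma_X$.

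Finally, for the filter reformulations, (ii) $\iff$ (ix) follows directly from the definition of filter convergence ($\mathcal{F}$ converges to $z$ iff $\mathcal{N}(z) \subseteq \mathcal{F}$): if (ii) holds convergence to $x$ and $y$ is the same condition, and conversely applying (ix) to $\mathcal{N}(x)$ forces $\mathcal{N}(y) \subseteq \mathcal{N}(x)$, and symmetrically. For (v) $\iff$ (x), the direction ($\Rightarrow$) is easy since (v) gives $\mathcal{N}(x) = \mathcal{N}(y)$, making the cluster point condition $\mathcal F \cup \mathcal{N}(\cdot)$ symmetric in $x,y$. The reverse direction is the main obstacle: it requires choosing a good test filter. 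The plan is to apply (x) to the ultrafilter $\mathcal{U}_x = \{S \subseteq X : x \in S\}$ from the earlier example. Since $\mathcal{N}(x) \subseteq \mathcal{U}_x$, the point $x$ is a cluster point of $\mathcal{U}_x$, so by hypothesis $y$ is too, meaning some proper filter $\mathcal G \supseteq \mathcal{U}_x \cup \mathcal{N}(y)$ exists; by maximality of $\mathcal{U}_x$ we must have $\mathcal{G} = \mathcal{U}_x$, so every neighbourhood of $y$ contains $x$. Symmetry then yields (v). Remark~\ref{net remark} lets us ignore the net half of (ix) and (x).
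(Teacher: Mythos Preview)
Your proposal is correct and follows essentially the same route as the paper: statement (v) serves as the hub, (viii) is handled by induction on the Borel algebra, and the principal ultrafilters $\mathcal{U}_x$, $\mathcal{U}_y$ are the key test filters for (x). The only cosmetic difference is that the paper argues (x) $\Rightarrow$ (v) by contradiction (assuming an open set separates $x$ from $y$ and deriving $\mathcal{N}(x) \not\subseteq \mathcal{U}_y$), whereas you argue directly via maximality and symmetry; both are equally valid.
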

\begin{proof}
We prove the equivalences $\mathrm{(ix)} \iff \mathrm{(ii)} \iff \mathrm{(i)} \iff \mathrm{(v)} \iff \mathrm{(iii)} \iff \mathrm{(iv)}$, $\mathrm{(v)} \iff \mathrm{(vi)} \iff \mathrm{(vii)}$, $\mathrm{(v)} \iff \mathrm{(viii)}$ and $\mathrm{(ii)} \Rightarrow \mathrm{(x)} \Rightarrow\mathrm{(v)}$.

Suppose $\mathrm{(i)}$. Suppose that $S$ is a neighbourhood of $x$. Then there is an open set $U$ such that $x \in U \subseteq S$. By $\mathrm{(i)}$, $y \in U \subseteq S$, so $S$ is a neighbourhood of $y$. By symmetry, every neighbourhood of $y$ is a neighbourhood of $x$. This proves that $\mathrm{(i) \Rightarrow \mathrm{(ii)}}$. The other direction $\mathrm{(ii)} \Rightarrow \mathrm{(i)}$ is trivial.

Trivially $\mathrm{(i)} \Leftrightarrow \mathrm{(v)} \Rightarrow \mathrm{(iii)}$. Suppose $\mathrm{(iii)}$. Let $\mathcal B$ be a basis and $B$ an open set. Then $B = \bigcup_{B' \in S} B'$ for some subset $S \subseteq \mathcal{B}$, and for each $B'$, $x \in B'$ if and only if $y \in B'$. Then
\begin{align*}
x \in B & \iff x \in B' \text{ for some } B' \in S \\
		& \iff y \in B' \text{ for some } B' \in S \\
		& \iff y \in B,
\end{align*}
so $\mathrm{(v)}$ holds, which proves that $\mathrm{(iii)} \Rightarrow \mathrm{(v)}$.

Every subbasic set is in the basis determined by the subbasis. Thus $\mathrm{(iii)} \Rightarrow \mathrm{(iv)}$. Suppose $\mathrm{(iv)}$. Let $\mathcal{S}$ be a subbasis and $B$ a basic open set in the basis determined by $\mathcal{S}$. Then $B = \bigcap_{B' \in S} B'$ for some finite subset $S \subseteq \mathcal{S}$, and for each $B'$, $x \in B'$ if and only if $y \in B'$. Then
\begin{align*}
x \in B & \iff x \in B' \text{ for all } B' \in S \\
		& \iff y \in B' \text{ for all } B' \in S \\
		& \iff y \in B,
\end{align*}
so $\mathrm{(iii)}$ holds, which proves that $\mathrm{(iv)} \Rightarrow \mathrm{(iii)}$.

Claim $\mathrm{(v)}$ states that for all open sets $U$, $x \in U$ if and only if $y \in U$. Claim $\mathrm{(vi)}$ states that for all open sets $U$, $x \in U^c$ if and only if $y \in U^c$. It is then clear that $\mathrm{(v)} \Leftrightarrow \mathrm{(vi)}$.

Suppose $\mathrm{(vi)}$ holds. Then
\[
\overline{\{x\}} = \bigcap_{\substack{x \in F \\ F \text{ closed}}} F = \bigcap_{\substack{y \in F \\ F \text{ closed}}} F = \overline{\{y\}}.
\]
Hence $\mathrm{(vi)} \Rightarrow \mathrm{(vii)}$. Suppose then that $\mathrm{(vi)}$ does not hold. Without loss of generality, we may assume that there is a closed set containing $x$ that does not contain $y$. Then $y \not \in \overline{\{x\}}$, even though $y \in \overline{\{y\}}$. Thus $\mathrm{(vii)}$ fails. By contraposition, $\mathrm{(vii)} \Rightarrow \mathrm{(vi)}$.

Since open sets are Borel sets, $\mathrm{(viii)} \Rightarrow \mathrm{(v)}$. Suppose then that $\mathrm{(v)}$ holds. We use induction to show that $\mathrm{(viii)}$ follows:

Suppose $B$ is an open subset of $X$. Then $x \in B$ if and only if $y \in B$.

Suppose $B = \bigcup_{B' \in \Delta} B'$, where $\Delta \subseteq \Sigma_{X}$ is countable, and for each $B'$, $x \in B'$ if and only if $y \in B'$. Then
\begin{align*}
x \in B & \iff x \in B' \text{ for some } B' \in \Delta \\
		& \iff y \in B' \text{ for some } B' \in \Delta \\
		& \iff y \in B.
\end{align*}

Suppose $B = B_1^c$, where $B_1 \in \Sigma_X$ and $x \in B_1$ if and only if $y \in B_1$. Then
\[
x \in B \iff x \not \in B_1 \iff y \not \in B_1 \iff y \in B.
\]

This concludes the proof that $\mathrm{(v)} \Rightarrow \mathrm{(viii)}$.

For $\mathrm{(ix)}$ and $\mathrm{(x)}$, we consider filters only; see remark \ref{net remark}.

Suppose $\mathrm{(ii)}$. Let $\mathcal{F}$ be a filter on $X$. Then $\mathcal{N}(x) \subseteq \mathcal{F}$ if and only if $\mathcal{N}(y) \subseteq \mathcal{F}$, so if $\mathcal{F}$ converges to one of the points $x$ and $y$, it converges to both of them. This proves $\mathrm{(ii)} \Rightarrow \mathrm{(ix)}$. For the other direction, suppose $\mathrm{(ii)}$ does not hold. Then one of the sets $\mathcal{N}(x)$ and $\mathcal{N}(y)$ is not a subset of the other; without loss of generality, we may assume $\mathcal{N}(x) \not \subseteq \mathcal{N}(y)$. Then $\mathcal{N}(y)$ is a filter that includes $\mathcal{N}(y)$ but not $\mathcal{N}(x)$. Therefore, $\mathcal{N}(y)$ converges to $y$ but not to $x$. Thus $\mathrm{(ix)}$ fails, so by contraposition, $\mathrm{(ix)} \Rightarrow \mathrm{(ii)}$.

Trivially $\mathrm{(ii)} \Rightarrow \mathrm{(x)}$. We show that $\mathrm{(x)} \Rightarrow{\mathrm{(v)}}$. Suppose that $\mathrm{(x)}$ holds. Let $\mathcal{U}$ be an ultrafilter. By $\mathrm{(x)}$, $\mathcal{U} \cup \mathcal{N}(x)$ is included in a proper filter if and only if $\mathcal{U} \cup \mathcal{N}(y)$ is included in a proper filter. Since ultrafilters are maximal proper filters, the only proper filter that could include these is $\mathcal{U}$. Hence
\begin{equation} \label{ultrafilters converge to same points}
\begin{array}{ccc}
\mathcal{N}(x) \subseteq \mathcal{U} & \text{iff} & \mathcal{N}(y) \subseteq \mathcal{U}.
\end{array}
\end{equation}
Suppose $\mathrm{(v)}$ fails. Without loss of generality, we may assume that there is an open set $V$ such that $x \in V$ and $y \not \in V$. Let $\mathcal{U}_y$ be the ultrafilter $\{S \in \powerset X \mid y \in S\}$. Clearly $\mathcal{N}(y) \subseteq \mathcal{U}_y$. On the other hand, $V \not \in \mathcal{U}_y$, but $V \in \mathcal{N}(x)$. Therefore, $\mathcal{N}(x) \not \subseteq \mathcal{U}_y$, which contradicts (\ref{ultrafilters converge to same points}).
\end{proof}

\begin{remark}
The formulation of $\mathrm{(iii)}$ and $\mathrm{(iv)}$ in the above lemma is deliberately a bit vague. To show $\mathrm{(iv)} \Rightarrow \mathrm{(iii)} \Rightarrow \mathrm{(v)}$, it is enough to suppose these properties for \emph{some} basis or subbasis that generates the topology of $X$. Then $\mathrm{(v)}$ implies $\mathrm{(iii)}$ and $\mathrm{(iv)}$ for \emph{all} bases and subbases for the same topology.
\end{remark}

\begin{example}
Let $U_m = \{n \in \N \mid m \text{ divides } n\}$ for all $m \in \Z_+$. Then $\mathcal S = \{\N\} \cup \{U_p \mid p \text{ is a prime}\}$ is a subbasis of a topology on $\N$. By lemma \ref{indistinguishability equivalences} $\mathrm{(iv)}$, $x \equiv y$ if and only if $x$ and $y$ have the same prime factors.
\end{example}

Given a topological space $X$, we denote by $\eta(x)$ the equivalence class of $x \in X$ with respect to $\equiv$, that is, $\eta(x) = \{y \in X \mid y \equiv x\}$. The following theorem gives a simple formula for the equivalence classes.

\begin{theorem} \label{equivalence class is intersection of closure and open neighbourhoods}
Let $(X, \tau)$ be a topological space. For all $x \in X$,
\[
\eta(x) = \overline{\{x\}} \cap \bigcap_{\substack{U \in \tau \\ x \in U}} U.
\]
\end{theorem}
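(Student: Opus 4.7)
The plan is to prove both set inclusions, leaning on the equivalences established in Lemma \ref{indistinguishability equivalences}, in particular (v) ($x$ and $y$ belong to the same open sets) and (vii) ($\overline{\{x\}} = \overline{\{y\}}$).

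For the inclusion $\eta(x) \subseteq \overline{\{x\}} \cap \bigcap_{x \in U, U \in \tau} U$, I will take $y \in \eta(x)$. By (vii), $y \in \overline{\{y\}} = \overline{\{x\}}$, which handles the closure factor. For the other factor, (v) immediately tells me that any open $U$ containing $x$ also contains $y$, so $y \in \bigcap_{x \in U, U \in \tau} U$.

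For the reverse inclusion, I will assume $y$ lies in the right-hand side and aim to show $y \equiv x$ via criterion (v). One direction is immediate: since $y \in \bigcap_{x \in U, U \in \tau} U$, every open set containing $x$ also contains $y$. For the opposite direction I will use the condition $y \in \overline{\{x\}}$, which, by the standard characterization of closure, means that every open neighbourhood of $y$ intersects $\{x\}$; equivalently, any open set $V$ with $y \in V$ satisfies $x \in V$. Combining the two directions gives (v), hence $y \equiv x$ by the lemma.

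There is no serious obstacle; the statement is essentially a repackaging of the lemma. The only step that requires a moment of thought is recognising that the condition $y \in \overline{\{x\}}$ is exactly the ``open sets around $y$ contain $x$'' half of topological indistinguishability, which is why combining it with the intersection over open neighbourhoods of $x$ suffices to recover the full equivalence relation.
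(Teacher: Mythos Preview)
Your proof is correct and very close to the paper's. The forward inclusion is handled identically (via parts (v) and (vii) of Lemma~\ref{indistinguishability equivalences}). For the reverse inclusion the paper argues by contraposition using (vii), splitting into the cases $y \notin \overline{\{x\}}$ and $x \notin \overline{\{y\}}$, whereas you argue directly by unpacking $y \in \overline{\{x\}}$ as ``every open neighbourhood of $y$ contains $x$'' and combining this with the intersection condition to recover (v). Your direct argument avoids the case split and is marginally cleaner, but the two are really the same idea viewed from opposite ends.
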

\begin{proof}
Let $y \in \eta(x)$. Then by lemma \ref{indistinguishability equivalences} \rm{(v)} and \rm{(vi)}, $x$ and $y$ are contained in the same open sets and same closed sets. Hence
\[
y \in \bigcap_{\substack{F^c \in \tau \\ x \in F}} F \cap \bigcap_{\substack{U \in \tau \\ x \in U}} U  = \overline{\{x\}} \cap \bigcap_{\substack{U \in \tau \\ x \in U}} U,
\]
which proves that
\[
\eta(x) \subseteq \overline{\{x\}} \cap \bigcap_{\substack{U \in \tau \\ x \in U}} U.
\]

We prove inclusion in the other direction by contraposition. Suppose $y \not \in \eta(x)$. Then by lemma \ref{indistinguishability equivalences} \rm{(vii)}, there are two possibilities: $y \not \in \overline{\{x\}}$, or $x \not \in \overline{\{y\}}$. In the first case,
\[
y \not \in \overline{\{x\}} \cap \bigcap_{\substack{U \in \tau \\ x \in U}} U.
\]
In the latter case, $\overline{\{y\}}^c$ is an open set containing $x$ and not containing $y$, so
\[
y \not \in \overline{\{x\}} \cap \bigcap_{\substack{U \in \tau \\ x \in U}} U.
\]
By contraposition, we have
\[
\overline{\{x\}} \cap \bigcap_{\substack{U \in \tau \\ x \in U}} U \subseteq \eta(x),
\]
and the claim follows.
\end{proof}

\begin{remark}
We could also write the equivalence class as the intersection of all Borel sets containing $x$:
\[
\eta(x) = \smashoperator[r]{\bigcap_{\substack{B \in \Sigma_X \\ x \in B}}} \enspace B.
\]
Using lemma \ref{indistinguishability equivalences} \rm{(viii)}, we can write a proof for this similar to that of theorem \ref{equivalence class is intersection of closure and open neighbourhoods}.
\end{remark}

\begin{corollary}
For all $x \in X$,
\[
\eta(x) \subseteq \smashoperator[r]{\bigcap_{U \in \mathcal N(x)}} \enspace U.
\]
\end{corollary}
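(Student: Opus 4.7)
The plan is to deduce this directly from the preceding theorem, observing that the two intersections involved coincide. By theorem \ref{equivalence class is intersection of closure and open neighbourhoods}, we already have
\[
\eta(x) \subseteq \bigcap_{\substack{U \in \tau \\ x \in U}} U,
\]
so it suffices to prove that this latter intersection is contained in $\bigcap_{U \in \mathcal{N}(x)} U$.

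For this, I would take an arbitrary $S \in \mathcal{N}(x)$ and use the definition of neighbourhood to produce an open set $V$ with $x \in V \subseteq S$. Then $\bigcap_{U \in \tau,\, x \in U} U \subseteq V \subseteq S$, and since $S$ was arbitrary, taking the intersection over all $S \in \mathcal{N}(x)$ yields
\[
\bigcap_{\substack{U \in \tau \\ x \in U}} U \subseteq \bigcap_{U \in \mathcal{N}(x)} U.
\]
Chaining this with the inclusion from theorem \ref{equivalence class is intersection of closure and open neighbourhoods} finishes the proof.

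As a sanity check, one can give an even shorter argument via lemma \ref{indistinguishability equivalences} \rm{(ii)}: if $y \in \eta(x)$, then $\mathcal{N}(y) = \mathcal{N}(x)$, so every $U \in \mathcal{N}(x)$ is also a neighbourhood of $y$ and hence contains $y$. There is no real obstacle here; the corollary is essentially a packaging statement, and either approach reduces it to a one-line verification.
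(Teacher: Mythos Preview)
Your proposal is correct and follows essentially the same route as the paper: both deduce the inclusion from theorem \ref{equivalence class is intersection of closure and open neighbourhoods} and then compare the intersection over open neighbourhoods with the intersection over all neighbourhoods via the fact that every neighbourhood contains an open one. The paper actually states the two intersections are \emph{equal} (which is immediate since open neighbourhoods form a subfamily of $\mathcal{N}(x)$), whereas you prove only the inclusion needed for the corollary---a harmless difference.
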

\begin{proof}
By theorem \ref{equivalence class is intersection of closure and open neighbourhoods},
\[
\eta(x) \subseteq \smashoperator[r]{\bigcap_{\substack{U \text{ open} \\ x \in U}}} \enspace U = \smashoperator[r]{\bigcap_{U \in \mathcal N(x)}} \enspace U.
\]
The last equality follows from the fact that every open neighbourhood $U$ of $x$ is the intersection of those neighbourhoods of $x$ that include $U$.
\end{proof}

\begin{corollary} \label{equivalence class subset of closure}
For all $x \in X$, $\eta(x) \subseteq \overline{\{x\}}$.
\end{corollary}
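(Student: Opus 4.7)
The plan is to observe that this corollary is a completely immediate consequence of Theorem \ref{equivalence class is intersection of closure and open neighbourhoods}. That theorem writes $\eta(x)$ as an intersection of two sets, one of which is precisely $\overline{\{x\}}$. Since any intersection is contained in each of its factors, the conclusion follows in a single line.

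Concretely, I would simply invoke Theorem \ref{equivalence class is intersection of closure and open neighbourhoods} to obtain
\[
\eta(x) = \overline{\{x\}} \cap \bigcap_{\substack{U \in \tau \\ x \in U}} U \subseteq \overline{\{x\}},
\]
and that is the whole argument. No case analysis or auxiliary construction is needed.

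There is no real obstacle here; the substantive work was already done in proving Lemma \ref{indistinguishability equivalences} (specifically the equivalence of \rm{(i)} with \rm{(vi)} and \rm{(vii)}) and in the derivation of Theorem \ref{equivalence class is intersection of closure and open neighbourhoods}. The corollary is essentially bookkeeping, isolating the ``closed'' half of the formula for $\eta(x)$ as a standalone fact that will be convenient to cite later. One could alternatively give a direct proof by noting that if $y \equiv x$ then by Lemma \ref{indistinguishability equivalences} \rm{(vii)} we have $\overline{\{y\}} = \overline{\{x\}}$, so $y \in \overline{\{y\}} = \overline{\{x\}}$; but citing the theorem is cleaner.
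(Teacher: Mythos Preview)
Your proposal is correct and matches the paper's proof exactly: the paper likewise states that the result follows from Theorem~\ref{equivalence class is intersection of closure and open neighbourhoods}, and it also records precisely the alternative direct argument via Lemma~\ref{indistinguishability equivalences}~(vii) that you mention at the end.
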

\begin{proof}
This follows from theorem \ref{equivalence class is intersection of closure and open neighbourhoods}.

An alternative proof: Let $y \in \eta(x)$. Then $\eta(y) = \eta(x)$. By lemma \ref{indistinguishability equivalences} \rm{(vii)}, $\overline{\{y\}} = \overline{\{x\}}$. Since $y \in \overline{\{y\}}$, we have $y \in \overline{\{x\}}$.
\end{proof}

Given a topological space $X$, we define $X/{\equiv}$ as the topological space, where the space as a set is the set of equivalence classes under $\equiv$, and the topology is the finest such topology that the quotient map $\eta \colon X \to X/{\equiv}$ that maps each element $x \in X$ to its equivalence class $\eta(x)$ is continuous. In other words, the open sets of $X/{\equiv}$ are precisely those sets whose preimage under $\eta$ is open in $X$. We call the space $X/{\equiv}$ the \emph{Kolmogorov quotient} of $X$.

The Kolmogorov quotient is always a Kolmogorov space, but a rigorous proof of this will have to wait until theorem \ref{quotient is T_0}.

The continuity of $\eta$ already lets us know some things about the relationship between $X$ and $X/{\equiv}$; for example, if $A \subseteq X$ is compact, then so is $\eta(A)$.

\begin{example}
Take the set $X = \{1,2,3,4\}$ with the clopen basis $\{\{1,2\}, \\ \{3,4\}\}$. The Kolmogorov quotient $X/{\equiv}$ is the two-element set $\{\eta(1), \eta(3)\} = \{\{1,2\}, \{3,4\}\}$ with the discrete topology.
\end{example}

\begin{example}
The Kolmogorov quotient of any nonempty set with the trivial topology is a space consisting of a single point.
\end{example}

\begin{example} \label{L^p}
Let $p \geq 1$. Let $L^p$ be the set of all measurable functions $f$ from a measure space $(S, \Sigma, \mu)$ to $\R$ such that
\[
\int_S {\abs f}^p \, \mathrm{d}\mu < \infty.
\]
Defining $(f + g)(x) = f(x) + g(x)$ and $(\lambda f)(x) = \lambda f(x)$ for all $x \in S$, $f, g \in L^p$ and $\lambda \in \R$ makes $L^p$ a vector space.
Denote
\[
\norm{f}_p = \left(\int_S {\abs f}^p \, \mathrm{d}\mu  \right)^\frac{1}{p}.
\]
The map $f \mapsto \norm{f}_p$ is a \emph{seminorm} (see definition \ref{seminorm}). It is not a norm, since $\norm{f}_p = 0$ for all functions $f$ that agree with the zero function almost everywhere. In the Kolmogorov quotient $\mathcal{L}^p = L^p/{\equiv}$, on the other hand, this seminorm becomes a norm. In section \ref{Pseudometrics and seminorms}, we show that this happens to all seminorms when we take the Kolmogorov quotient. The spaces $\mathcal{L}^p$ are important in analysis and measure theory (\cite{Lp-spaces}).
\end{example}

\begin{example}
A discrete version of example \ref{L^p} is obtained by taking the measure space $\N$ with the counting measure, i.e. the measure of a subset of $\N$ is its cardinality. In this case, the space consists of sequences where
\[
\sum_{n=0}^\infty {\abs{x_n}^p} < \infty,
\]
and
\[
\norm{(x_n)}_p = \left(\sum_{n=0}^\infty {\abs{x_n}^p} \right)^\frac{1}{p}.
\]
\end{example}

Based on the quotient map $\eta \colon X \to X/{\equiv}$, we define two maps $\eta^\rightarrow \colon \Sigma_X \to \Sigma_{X/{\equiv}}$ and $\eta^\leftarrow \colon \Sigma_{X/{\equiv}} \to \powerset X$ as follows:
\[
\eta^\rightarrow(B) = \eta(B) = \{\eta(x) \mid x \in B\},
\]
and
\[
\eta^\leftarrow(B') = \eta^{-1}(B') = \{ x \in X \mid \eta(x) \in B'\}
\]
for all $B \in \Sigma_X$ and $B' \in \Sigma_{X/{\equiv}}$.

\begin{theorem}
The map $\eta^\rightarrow$ is an isomorphism between the Boolean algebras $\Sigma_X$ and $\Sigma_{X/{\equiv}}$.
\end{theorem}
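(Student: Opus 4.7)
The plan is to establish two key facts simultaneously via induction on the Borel hierarchy: (1) every Borel set $B \in \Sigma_X$ is \emph{saturated} with respect to $\equiv$, meaning that $\eta^{\leftarrow}(\eta^{\rightarrow}(B)) = B$, and (2) the image $\eta^{\rightarrow}(B)$ is itself Borel in $X/{\equiv}$. Fact (1) is essentially a restatement of lemma \ref{indistinguishability equivalences} (viii): if $x \in B$ and $x \equiv y$, then $y \in B$, so the preimage under $\eta$ of $\eta(B)$ collapses back to $B$. For fact (2), I would consider the collection $\mathcal{A} = \{B \in \Sigma_X \mid \eta^{\rightarrow}(B) \in \Sigma_{X/{\equiv}}\}$ and check the three clauses of the definition of $\Sigma_X$: if $B$ is open, then $\eta^{\leftarrow}(\eta^{\rightarrow}(B)) = B$ is open, so $\eta^{\rightarrow}(B)$ is open by definition of the quotient topology; image commutes with arbitrary unions, giving closure under countable unions; and, using saturation, $\eta^{\rightarrow}(B^c) = \eta^{\rightarrow}(B)^c$, so $\mathcal{A}$ is closed under complement.

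With these facts in hand, I would check bijectivity. Injectivity is immediate from (1): if $\eta^{\rightarrow}(B_1) = \eta^{\rightarrow}(B_2)$, then
\[
B_1 = \eta^{\leftarrow}(\eta^{\rightarrow}(B_1)) = \eta^{\leftarrow}(\eta^{\rightarrow}(B_2)) = B_2.
\]
For surjectivity, I would run the dual induction on the collection $\mathcal{A}' = \{B' \in \Sigma_{X/{\equiv}} \mid \eta^{\leftarrow}(B') \in \Sigma_X\}$. Continuity of $\eta$ shows $\mathcal{A}'$ contains the topology of $X/{\equiv}$, and since $\eta^{\leftarrow}$ commutes with arbitrary unions and complements, $\mathcal{A}' = \Sigma_{X/{\equiv}}$. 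Given any $B' \in \Sigma_{X/{\equiv}}$, surjectivity of $\eta$ yields $\eta^{\rightarrow}(\eta^{\leftarrow}(B')) = B'$, witnessing that $B'$ is in the image of $\eta^{\rightarrow}$.

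Finally, I would verify that $\eta^{\rightarrow}$ is a Boolean algebra homomorphism. Preservation of unions (and hence, via De Morgan, of intersections) holds for any set-theoretic image. The nontrivial part is preservation of complement, $\eta^{\rightarrow}(B^c) = \eta^{\rightarrow}(B)^c$, which was already established above and is the one place where saturation is essential: for a general, non-saturated set $B$, the class $\eta(x)$ of an element $x \notin B$ might still intersect $B$, so $\eta^{\rightarrow}(B^c)$ and $\eta^{\rightarrow}(B)^c$ would differ.

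The main obstacle is really the bookkeeping around saturation. Everything hinges on lemma \ref{indistinguishability equivalences} (viii), which guarantees that Borel sets do not separate indistinguishable points; once this is in place, all the Boolean operations and bijectivity considerations reduce to the observation that $\eta^{\rightarrow}$ and $\eta^{\leftarrow}$ are mutually inverse on the respective Borel algebras. I would keep the exposition symmetric by handling $\eta^{\rightarrow}$ and $\eta^{\leftarrow}$ in parallel, since each induction is only three short cases.
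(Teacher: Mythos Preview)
Your proposal is correct and follows essentially the same route as the paper: both arguments rest on lemma \ref{indistinguishability equivalences} (viii), prove surjectivity by induction on $\Sigma_{X/{\equiv}}$ via $\eta^\leftarrow$, and obtain preservation of intersections from unions and complements via De Morgan. Your framing in terms of saturation is a bit more systematic---in particular, you explicitly verify that $\eta^\rightarrow$ lands in $\Sigma_{X/{\equiv}}$, which the paper leaves implicit---but the underlying ideas are identical.
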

\begin{proof}
We wish to prove that $\eta^\rightarrow$ is bijective and preserves binary unions, binary intersections and complements.

For injectivity, let $B_1$, $B_2 \in \Sigma_X$. If $\eta^\rightarrow(B_1) \subseteq \eta^\rightarrow(B_2)$, then for all $x_1 \in B_1$, there is a point $x_2 \in B_2$ such that $\eta(x_1) = \eta(x_2)$. Then $x_1 \equiv x_2$, and they are in the same Borel sets; in particular $x_1 \in B_2$. Hence $B_1 \subseteq B_2$. Similarly, if $\eta^\rightarrow(B_2) \subseteq \eta^\rightarrow(B_1)$, then $B_2 \subseteq B_1$. Consequently, $\eta^\rightarrow(B_1) = \eta^\rightarrow(B_2)$ implies $B_1 = B_2$.

Surjectivity can be proved by induction:

Suppose $B$ is an open subset of $X/{\equiv}$. Then $\eta^\leftarrow(B)$ is open by definition of $X/{\equiv}$, and hence $\eta^\leftarrow(B) \in \Sigma_X$. Then $B = \eta^\rightarrow(\eta^\leftarrow(B))$.

Suppose $B = \bigcup_{B' \in \Delta} B'$, where $\Delta \subseteq \Sigma_{X/{\equiv}}$ is countable and each $B'$ is an image of some $\eta^\leftarrow(B') \in \Sigma_X$. Then $\eta^\leftarrow(B) = \eta^\leftarrow\left(\bigcup_{B' \in \Delta} B' \right) = \bigcup_{B' \in \Delta} \eta^\leftarrow(B') \in \Sigma_X$, and $B = \eta^\rightarrow(\eta^\leftarrow(B))$. 

Suppose $B = B_1^c$, where $B_1 \in \Sigma_{X/{\equiv}}$ and $B_1$ is the image of some set $\eta^\leftarrow(B_1) \in \Sigma_X$. Then $\eta^\leftarrow(B_1^c) = [\eta^\leftarrow(B_1)]^c \in \Sigma_X$ and $B_1^c = \eta^\rightarrow(\eta^\leftarrow(B_1^c))$.


Thus $\eta^\rightarrow$ is surjective.

Let $B_1$, $B_2 \in \Sigma_X$. Then
\begin{align*}
t \in \eta^\rightarrow(B_1 \cup B_2) & \Leftrightarrow \text{there is } x \in B_1 \cup B_2 \text{ such that } t = \eta(x)\\
						 & \Leftrightarrow \text{there is } x \in B_1 \text{ or } x \in B_2 \text{ such that } t = \eta(x) \\
						 & \Leftrightarrow t \in \eta^\rightarrow(B_1) \text{ or } t \in \eta^\rightarrow(B_2) \\
						 & \Leftrightarrow t \in \eta^\rightarrow(B_1) \cup \eta^\rightarrow(B_2),
\end{align*}
so $\eta^\rightarrow$ preserves binary unions.

For complements, we note first that the images $\eta^\rightarrow(B)$ and $\eta^\rightarrow(B^c)$ are disjoint for all Borel sets $B \in \Sigma_X$. Indeed, if there were points $x_1 \in B$, $x_2 \in B^c$ such that $\eta(x_1) = \eta(x_2)$, then $x_1$ and $x_2$ would be contained in the same Borel sets. Hence $x_1 \in B$ and $x_1 \in B^c$, which is a contradiction.

Let $B \in \Sigma_X$. Since $\eta^\rightarrow$ preserves binary unions, we have
\[
X/{\equiv} = \eta^\rightarrow(X) = \eta^\rightarrow(B \cup B^c) = \eta^\rightarrow(B) \cup \eta^\rightarrow(B^c).
\]
Since $\eta^\rightarrow(B)$ and $\eta^\rightarrow(B^c)$ are disjoint, we must have $\eta^\rightarrow(B^c) = [\eta(B)]^c$.

Finally, the fact that $\eta^\rightarrow$ preserves binary intersections is now easily seen from De Morgan's laws:
\begin{align*}
\eta^\rightarrow(B_1 \cap B_2) 	& = \eta^\rightarrow((B_1^c \cup B_2^c)^c) \\
								& = \{[\eta^\rightarrow(B_1)]^c \cup [\eta^\rightarrow(B_2)]^c\}^c \\
								& = \eta^\rightarrow(B_1) \cap \eta^\rightarrow(B_2).
\end{align*}
\end{proof}

\begin{corollary} \label{eta is open}
The quotient map $\eta$ is open, i.e. if $A \subseteq X$ is open, then $\eta^\rightarrow(A)$ is open.
\end{corollary}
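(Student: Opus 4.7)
The plan is to unwind the definition of the quotient topology and then appeal to Lemma \ref{indistinguishability equivalences} to show that open sets in $X$ are saturated under $\equiv$, from which the openness of their images follows at once.

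First, recall that by the definition of $X/{\equiv}$, a subset $B' \subseteq X/{\equiv}$ is open precisely when $\eta^{-1}(B')$ is open in $X$. Thus, to prove that $\eta^\rightarrow(A)$ is open in $X/{\equiv}$, it suffices to show that its preimage $\eta^{-1}(\eta^\rightarrow(A))$ is open in $X$.

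Next, I would establish the key set-theoretic identity $\eta^{-1}(\eta^\rightarrow(A)) = A$ whenever $A$ is open. The inclusion $A \subseteq \eta^{-1}(\eta^\rightarrow(A))$ is trivial. For the reverse inclusion, take any $x \in \eta^{-1}(\eta^\rightarrow(A))$; then $\eta(x) = \eta(a)$ for some $a \in A$, which means $x \equiv a$. By Lemma \ref{indistinguishability equivalences} (v), $x$ and $a$ lie in exactly the same open sets, so $a \in A$ and $A$ open force $x \in A$. Hence $\eta^{-1}(\eta^\rightarrow(A)) = A$, which is open by hypothesis, so $\eta^\rightarrow(A)$ is open in $X/{\equiv}$.

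There is really no significant obstacle here; the main conceptual point is the saturation of open sets under topological indistinguishability, which is exactly one of the equivalent formulations already recorded in Lemma \ref{indistinguishability equivalences}. Alternatively, one could derive the corollary from the preceding theorem by noting that the surjectivity argument showed every open set in $X/{\equiv}$ has the form $\eta^\rightarrow(U)$ for some open $U \subseteq X$, combined with injectivity of $\eta^\rightarrow$; but the direct verification above is shorter and avoids invoking the full isomorphism machinery.
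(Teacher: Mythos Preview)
Your proof is correct and is essentially the paper's argument unpacked: the paper writes ``By injectivity of $\eta^\rightarrow$, we have $A = \eta^\leftarrow(\eta^\rightarrow(A))$,'' which is exactly the saturation identity you verify directly via Lemma~\ref{indistinguishability equivalences}~(v), and then concludes openness of $\eta^\rightarrow(A)$ from the definition of the quotient topology. The only difference is cosmetic: the paper cites the injectivity established in the preceding theorem (whose proof for Borel sets is the same ``$x\equiv a$ and $a\in A$ imply $x\in A$'' step you spell out), whereas you appeal to the lemma directly.
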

\begin{proof}
By injectivity of $\eta^\rightarrow$, we have $A = \eta^\leftarrow(\eta^\rightarrow(A))$. By the definition of the Kolmogorov quotient, $\eta^\rightarrow(A)$ is open.
\end{proof}

\begin{corollary} \label{eta is closed}
The quotient map $\eta$ is closed, i.e. if $A \subseteq X$ is closed, then $\eta^\rightarrow(A)$ is closed.
\end{corollary}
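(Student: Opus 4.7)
The plan is to mirror the proof of corollary \ref{eta is open}, exploiting that $\eta^\rightarrow$ is a Boolean algebra isomorphism, so in particular preserves complements. Concretely, since the preceding theorem gives $\eta^\rightarrow(A^c) = [\eta^\rightarrow(A)]^c$, I would argue as follows: if $A \subseteq X$ is closed, then $A^c$ is open in $X$; by corollary \ref{eta is open}, $\eta^\rightarrow(A^c)$ is open in $X/{\equiv}$; but this set equals $[\eta^\rightarrow(A)]^c$, so $\eta^\rightarrow(A)$ is closed.

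An equally short alternative would be to go directly through the definition of the quotient topology. Since $B \subseteq X/{\equiv}$ is open iff $\eta^\leftarrow(B)$ is open, by taking complements one sees that $B$ is closed iff $\eta^\leftarrow(B)$ is closed. Injectivity of $\eta^\rightarrow$ already used in corollary \ref{eta is open} yields $A = \eta^\leftarrow(\eta^\rightarrow(A))$; if $A$ is closed, this preimage is closed, and hence $\eta^\rightarrow(A)$ is closed in $X/{\equiv}$.

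There is no genuine obstacle here: all the real work was done in the theorem establishing that $\eta^\rightarrow$ is a Boolean algebra isomorphism (supplying both the identity $A = \eta^\leftarrow(\eta^\rightarrow(A))$ and the commutation with complements). Given those tools, the statement follows in one or two lines, and either the complement-based argument or the direct quotient-topology argument is perfectly satisfactory; I would likely present the first, since it symmetrically parallels the proof of the companion corollary \ref{eta is open} that immediately precedes it.
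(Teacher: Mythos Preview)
Your first argument is exactly the paper's proof: use that $\eta^\rightarrow$ preserves complements to pass from openness of $\eta^\rightarrow(A^c)$ (given by corollary \ref{eta is open}) to closedness of $\eta^\rightarrow(A)$. The alternative you sketch is also fine but unnecessary, since the complement-based route is precisely what the paper does.
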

\begin{proof}
Since $\eta^\rightarrow$ preserves complements,
\begin{align*}
A \text{ is closed}	& \Leftrightarrow A^c \text{ is open} \\
					& \Rightarrow \eta^\rightarrow(A^c) \text{ is open} \\
					& \Leftrightarrow [\eta^\rightarrow(A)]^c \text{ is open} \\
					& \Leftrightarrow \eta^\rightarrow(A) \text{ is closed.}
\end{align*}
\end{proof}

The converses of corollaries \ref{eta is open} and \ref{eta is closed} generally do not hold: there can be a set $A \subseteq X$ such that even if $\eta(A)$ is open/closed, $A$ is not open/closed. Take the set $X = \{1,2,3,4\}$ with the clopen basis $\{\{1,2\}, \{3,4\}\}$. Let $A = \{1,3\}$. Clearly $A$ is neither open nor closed. On the other hand, $\eta(A) = \{\eta(1), \eta(3)\} = X/{\equiv}$, which is clopen.

Recall that a space is Kolmogorov or $T_0$ if every pair of points is topologically distinguishable. The following two results show that the name of Kolmogorov quotients is not arbitrarily chosen.

\begin{theorem}\label{quotient is T_0}
Let $X$ be a topological space. The Kolmogorov quotient of $X$ is a Kolmogorov space.
\end{theorem}
\begin{proof}
Suppose to the contrary that there are $x_1, x_2 \in X$ such that $\eta(x_1)$ and $\eta(x_2)$ are distinct topologically indistinguishable points in $X/{\equiv}$. Then $x_1$ and $x_2$ are topologically indistinguishable; otherwise $\eta(x_1) = \eta(x_2)$. Without loss of generality, we may assume that there is an open set $U$ such that $x_1 \in U$ and $x_2 \not \in U$. Now $\eta^\rightarrow(U)$ is open in $X/{\equiv}$ by corollary \ref{eta is open}, and $\eta(x_1) \in \eta^\rightarrow(U)$. Since $\eta(x_1)$ and $\eta(x_2)$ are topologically indistinguishable, $\eta(x_2) \in \eta^\rightarrow(U)$. But then $x_2 \in \eta^{-1}(\eta(x_2)) = U$; a contradiction.
\end{proof}

\begin{corollary}
A space is Kolmogorov if and only if it is homeomorphic to the Kolmogorov quotient of itself.
\end{corollary}
\begin{proof}
Suppose $X$ is a Kolmogorov space. Then every equivalence class consists of exactly one element, so $\eta$ is an open continuous bijection, that is, a homeomorphism.

Suppose then that there is a homeomorphism $f \colon X \to X/{\equiv}$. In particular, $f$ is injective, so every equivalence class consists of exactly one element, that is, $X$ is Kolmogorov.
\end{proof}

\begin{lemma} \label{continuous functions preserve equivalence classes}
Let $X$ and $Y$ be topological spaces and $f \colon X \to Y$ continuous. If $x_1 \equiv x_2$ for some $x_1, x_2 \in X$, then $f(x_1) \equiv f(x_2)$.
\end{lemma}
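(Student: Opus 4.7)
The plan is to reduce the claim to the characterization of topological indistinguishability via open sets given in lemma \ref{indistinguishability equivalences} (namely the equivalence $\mathrm{(i)} \iff \mathrm{(v)}$), and then exploit the continuity of $f$ to pull open neighbourhoods of $f(x_1)$ and $f(x_2)$ back to open neighbourhoods of $x_1$ and $x_2$.

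Concretely, I would first let $V \subseteq Y$ be an arbitrary open set containing $f(x_1)$. By continuity of $f$, the preimage $f^{-1}(V)$ is open in $X$, and it contains $x_1$. Applying the hypothesis $x_1 \equiv x_2$ through lemma \ref{indistinguishability equivalences} (v), we get $x_2 \in f^{-1}(V)$, which means $f(x_2) \in V$. Swapping the roles of $x_1$ and $x_2$ (which is legitimate because $\equiv$ is symmetric), the same argument shows that any open $V \subseteq Y$ containing $f(x_2)$ must also contain $f(x_1)$. Thus $f(x_1)$ and $f(x_2)$ lie in precisely the same open subsets of $Y$.

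Invoking lemma \ref{indistinguishability equivalences} once more, this time in the direction $\mathrm{(v)} \Rightarrow \mathrm{(i)}$, yields $f(x_1) \equiv f(x_2)$, which is exactly the claim. There is no real obstacle here: the proof is a one-line unwinding of definitions once the right characterization from the preceding lemma is chosen. The only minor stylistic point worth mentioning is that one could equally well phrase the argument in terms of any of the other equivalent conditions of lemma \ref{indistinguishability equivalences} (for instance condition (ii), writing the neighbourhood filter pushforward), but using open sets keeps the proof as short and transparent as possible.
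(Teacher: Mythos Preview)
Your proof is correct and matches the paper's own argument essentially line for line: take an arbitrary open neighbourhood of $f(x_1)$, pull it back via continuity, use $x_1 \equiv x_2$ to conclude $x_2$ lies in the preimage, and hence $f(x_2)$ lies in the original neighbourhood. The only cosmetic difference is that the paper leaves the symmetric direction implicit, whereas you spell it out.
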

\begin{proof}
Let $A \subseteq Y$ be an arbitrary open neighbourhood of $f(x_1)$. Since $f$ is continuous, $f^{-1}(A)$ is open. Clearly $x_1 \in f^{-1}(A)$. Since $x_1 \equiv x_2$, we have $x_2 \in f^{-1}(A)$. Then $f(x_2) \in f(f^{-1}(A)) \subseteq A$. Since $A$ was arbitrary, $f(x_1) \equiv f(x_2)$.
\end{proof}

The following theorem says that the quotient map $\eta$ is \emph{universal}.

\begin{theorem}
Let $\eta_X \colon X \to X/{\equiv}$ and $\eta_Y \colon Y \to Y/{\equiv}$ be the quotient maps and $f \colon X \to Y$ an arbitrary continuous map. Then there exists a continuous map $f_{\equiv} \colon X/{\equiv} \to Y/{\equiv}$ such that the diagram below commutes.
\[
\begin{tikzcd}
X \arrow{r}{f} \arrow[swap]{d}{\eta_X} & Y \arrow{d}{\eta_Y} \\%
X/{\equiv} \arrow{r}{f_{\equiv}}& Y/{\equiv}
\end{tikzcd}
\]
\end{theorem}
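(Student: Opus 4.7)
The natural plan is to define $f_\equiv$ by lifting $f$ through representatives and then verify in turn that it is well-defined, that the diagram commutes, and that it is continuous. Concretely, I would set $f_\equiv(\eta_X(x)) = \eta_Y(f(x))$ for every $x \in X$. Since every element of $X/{\equiv}$ is of the form $\eta_X(x)$ for some $x$, this prescription determines $f_\equiv$ completely, and the diagram commutes by construction, since $f_\equiv(\eta_X(x)) = \eta_Y(f(x))$ is precisely the assertion $f_\equiv \circ \eta_X = \eta_Y \circ f$.

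The first thing to check is that $f_\equiv$ is well-defined, i.e.\ independent of the choice of representative. Suppose $\eta_X(x_1) = \eta_X(x_2)$, so that $x_1 \equiv x_2$. By lemma \ref{continuous functions preserve equivalence classes}, $f(x_1) \equiv f(x_2)$ in $Y$, and therefore $\eta_Y(f(x_1)) = \eta_Y(f(x_2))$. Thus the value of $f_\equiv$ at the equivalence class $\eta_X(x)$ does not depend on the representative $x$.

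Continuity of $f_\equiv$ will follow from the universal property of the quotient topology on $X/{\equiv}$. Let $V \subseteq Y/{\equiv}$ be open. Then
\[
\eta_X^{-1}(f_\equiv^{-1}(V)) = (f_\equiv \circ \eta_X)^{-1}(V) = (\eta_Y \circ f)^{-1}(V) = f^{-1}(\eta_Y^{-1}(V)).
\]
Since $V$ is open in $Y/{\equiv}$, the definition of the Kolmogorov quotient gives that $\eta_Y^{-1}(V)$ is open in $Y$, and since $f$ is continuous, $f^{-1}(\eta_Y^{-1}(V))$ is open in $X$. Hence the preimage of $f_\equiv^{-1}(V)$ under $\eta_X$ is open, which by the definition of the topology on $X/{\equiv}$ means that $f_\equiv^{-1}(V)$ is open in $X/{\equiv}$. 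Therefore $f_\equiv$ is continuous.

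No step here is a genuine obstacle: well-definedness is handed to us by lemma \ref{continuous functions preserve equivalence classes}, and continuity follows immediately from the definition of the quotient topology combined with the identity $\eta_X^{-1} \circ f_\equiv^{-1} = f^{-1} \circ \eta_Y^{-1}$. The only subtlety worth flagging explicitly is to make sure the diagram is used consistently in both directions: once to \emph{define} $f_\equiv$ (forcing commutativity), and once to \emph{pull back} open sets through $\eta_X$ for the continuity argument.
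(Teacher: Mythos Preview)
Your proof is correct and follows the same overall plan as the paper: define $f_\equiv$ on representatives, invoke Lemma~\ref{continuous functions preserve equivalence classes} for well-definedness, and then check continuity. The only difference is in the continuity step. You use the universal property of the quotient topology directly, arguing that $\eta_X^{-1}(f_\equiv^{-1}(V)) = f^{-1}(\eta_Y^{-1}(V))$ is open in $X$ and hence $f_\equiv^{-1}(V)$ is open by definition of the topology on $X/{\equiv}$. The paper instead identifies $f_\equiv^{-1}(A)$ as $\eta_X^\rightarrow(f^{-1}(\eta_Y^\leftarrow(A)))$ and then appeals to Corollary~\ref{eta is open} (openness of $\eta_X$) to conclude it is open. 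Your route is slightly more economical, since it does not require the openness of $\eta_X$; the paper's route makes explicit what the set $f_\equiv^{-1}(A)$ actually is as a subset of $X/{\equiv}$. Both are standard and equally valid.
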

\begin{proof}
For all equivalence classes $\eta_X(x)$, define $f_{\equiv}(\eta_X(x)) = \eta_Y(f(x))$. This is well-defined: if $x_1 \equiv x_2$, then by the previous lemma we have
\[
f_{\equiv}(\eta_X(x_1)) = \eta_Y(f(x_1)) = \eta_Y(f(x_2)) = f_{\equiv}(\eta_X(x_2)).
\]
By definition, the diagram commutes.

For continuity of $f_{\equiv}$, let $A \subseteq Y/{\equiv}$ be open. Since $\eta_Y$ and $f$ are both continuous, $f^{-1}(\eta^\leftarrow_Y(A))$ is open. Note that
\begin{align*}
\eta_X(x) \in f^{-1}_{\equiv}(A) 	& \iff f_{\equiv}(\eta_X(x)) \in A \\
									& \iff \eta_Y(f(x)) \in A \\
									& \iff f(x) \in \eta^\leftarrow_Y(A) \\
									& \iff x \in f^{-1}(\eta^{\leftarrow}_Y(A)) \\
									& \iff \eta_X(x) \in \eta^\rightarrow_X(f^{-1}(\eta^{\leftarrow}_Y(A))).
\end{align*}
Thus, $f_{\equiv}^{-1}(A) = \eta^\rightarrow_X(f^{-1}(\eta^{\leftarrow}_Y(A)))$, and this is open by corollary \ref{eta is open}. Hence $f_{\equiv}$ is continuous.
\end{proof}

Choosing a representative from each equivalence class gives the following theorem, which states that all topological properties of the Kolmogorov quotient of $X$ hold also in a dense subspace of $X$. If there are infinitely many equivalence classes, then the axiom of choice is required.

\begin{theorem}
The space $X/{\equiv}$ is homeomorphic to a dense subspace of $X$.
\end{theorem}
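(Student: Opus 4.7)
The plan is to use the axiom of choice to select a representative $s(\alpha) \in \alpha$ from each equivalence class $\alpha \in X/{\equiv}$, and set $S = \{s(\alpha) \mid \alpha \in X/{\equiv}\} \subseteq X$. Equipping $S$ with the subspace topology, I will show that (a) the restriction $\eta|_S \colon S \to X/{\equiv}$ is a homeomorphism, and (b) $S$ is dense in $X$.

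For (a), note that $\eta|_S$ is bijective by the choice of $S$ (one representative per class), and it is continuous as the restriction of the continuous map $\eta$. To see that it is open, I will use the following key observation: every open set $V \subseteq X$ is a union of equivalence classes. Indeed, if $x \in V$ and $y \equiv x$, then by lemma \ref{indistinguishability equivalences} \rm{(v)} we have $y \in V$; hence $V = \eta^{-1}(\eta(V))$. Consequently, for any set $W \subseteq S$ that is open in $S$, write $W = V \cap S$ for some $V$ open in $X$; then $\eta(W) = \eta(V \cap S) = \eta(V)$, because $V \cap S$ still hits every equivalence class that $V$ hits. By corollary \ref{eta is open}, $\eta(V)$ is open in $X/{\equiv}$, so $\eta|_S$ is an open map. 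A continuous open bijection is a homeomorphism.

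For (b), let $x \in X$ be arbitrary and let $U$ be any open neighbourhood of $x$. Put $y = s(\eta(x)) \in S$. Since $y \equiv x$, lemma \ref{indistinguishability equivalences} \rm{(v)} gives $y \in U$, so $U \cap S \neq \emptyset$. Thus $x \in \overline{S}$, and since $x$ was arbitrary, $\overline{S} = X$.

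The only delicate point is the openness of $\eta|_S$, and the reason it is not really hard is the observation that open sets in $X$ are saturated with respect to $\equiv$; this lets the restriction inherit its open-map behaviour from corollary \ref{eta is open}. Everything else is routine once the representative set $S$ is fixed.
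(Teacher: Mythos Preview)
Your proof is correct and follows essentially the same approach as the paper: both select representatives via choice, and both verify that the resulting bijection between the representative set and $X/{\equiv}$ is a homeomorphism by exploiting that open sets of $X$ are $\equiv$-saturated (the paper phrases this as $x \in U \iff \mu(\eta(x)) \in U$, which is the same observation), with density proved identically. The only cosmetic difference is that the paper works with the section $\mu \colon X/{\equiv} \to \mu(X/{\equiv})$ while you work with its inverse $\eta|_S$.
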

\begin{proof}
Let $\mu \colon X/{\equiv} \to X$ be a function that picks a representative from each equivalence class. We show that $\mu$ is a homeomorphism between $X/{\equiv}$ and $\mu(X/{\equiv})$. Since equivalence classes are disjoint, $\mu$ is injective. Restricting the codomain to the image $\mu(X/{\equiv})$ makes $\mu$ surjective.

Let $U$ be an arbitrary open subset of $X$. We note that $x \equiv \mu(\eta(x))$, and hence $x \in U$ if and only if $\mu(\eta(x)) \in U$. Consequently,

\begin{align*}
\mu(\eta^\rightarrow(U)) & = \{\mu(\eta(x)) \mid x \in U\} \\
			 & = U \cap \{\mu(\eta(x)) \mid x \in X\} \\
			 & = U \cap \mu(X/{\equiv}),
\end{align*}
that is, the image of every open set of $X/{\equiv}$ is an open set of the subspace $\mu(X/{\equiv})$. Since $\mu$ is bijective, this proves that $\mu^{-1}$ is continuous. Take the inverse image of both sides. By injectivity of $\mu$,
\[
\eta^\rightarrow(U) = \mu^{-1}(U \cap \mu(X/{\equiv})).
\]
Thus the preimage of every open set of $\mu(X/{\equiv})$ is open, and hence $\mu$ is continuous.

We still need to show that $\mu(X/{\equiv})$ is dense in $X$. Let $U$ be a nonempty open set of $X$ and let $x \in U$. Then $\eta(x) \in \eta^\rightarrow(U)$, and hence $\mu(\eta(x)) \in \mu(\eta^\rightarrow(U)) = U \cap \mu(X/{\equiv})$. This shows that the intersection $U \cap \mu(X/{\equiv})$ is nonempty for all nonempty open subsets $U$ of $X$. Then every nonempty open set also intersects $\overline{\mu(X/{\equiv})}$. The complement of $\overline{\mu(X/{\equiv})}$ is open, so it must be empty; therefore $\overline{\mu(X/{\equiv})} = X$.
\end{proof}

The Kolmogorov quotient may have fewer subspaces than the original space. For example, the space $X = \{1,2,3,4\}$ with the clopen basis $\{\{1,2\}, \\ \{3,4\}\}$ has $2^4$ different subspaces, but the quotient $X/{\equiv} = \{\eta(1),\eta(3)\}$ has only $2^2$ different subspaces. The following theorem tells that the quotients of the lost subspaces are still subspaces of $X/{\equiv}$, up to homeomorphism.

\begin{theorem}
Let $X$ be a topological space and $S$ a subspace of $X$. Then the space $S/{\equiv}$ is homeomorphic to some subspace of $X/{\equiv}$.
\end{theorem}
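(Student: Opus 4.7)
The plan is to exhibit the natural candidate $\eta_X(S) \subseteq X/{\equiv}$, equipped with the subspace topology inherited from $X/{\equiv}$, and show that the assignment $\eta_S(s) \mapsto \eta_X(s)$ defines a homeomorphism $f \colon S/{\equiv} \to \eta_X(S)$. I write $\eta_X$ and $\eta_S$ for the two quotient maps, and $\equiv_X$, $\equiv_S$ for the two indistinguishability relations, to keep them apart.

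The crucial preliminary observation is that the restrictions of $\equiv_X$ and $\equiv_S$ to $S \times S$ coincide. Since the open sets of $S$ are exactly the traces $U \cap S$ with $U$ open in $X$, two points $s_1, s_2 \in S$ lie in the same relatively open sets of $S$ if and only if they lie in the same open sets of $X$. This single observation simultaneously makes $f$ well-defined (if $s_1 \equiv_S s_2$ then $s_1 \equiv_X s_2$, hence $\eta_X(s_1) = \eta_X(s_2)$) and injective (the reverse implication gives $\eta_X(s_1) = \eta_X(s_2) \Rightarrow s_1 \equiv_S s_2$). Surjectivity onto $\eta_X(S)$ is immediate by construction.

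For continuity, I would invoke the universal property of the quotient topology: the restriction $\eta_X|_S \colon S \to X/{\equiv}$ is continuous and, by the previous paragraph, factors through $\eta_S$, so the induced map $f$ is continuous. For the reverse, given an open $V \subseteq S/{\equiv}$, write $\eta_S^{-1}(V) = U \cap S$ for some open $U \subseteq X$; I would then establish $f(V) = \eta_X(U) \cap \eta_X(S)$, which is open in $\eta_X(S)$ because $\eta_X(U)$ is open in $X/{\equiv}$ by corollary \ref{eta is open}.

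The main obstacle is verifying the equality $\eta_X(U \cap S) = \eta_X(U) \cap \eta_X(S)$. The inclusion $\subseteq$ is trivial, but the reverse needs the key technical argument: if $y = \eta_X(u) = \eta_X(s)$ with $u \in U$ and $s \in S$, then $u \equiv_X s$, and because $U$ is open, topological indistinguishability (lemma \ref{indistinguishability equivalences} \rm{(v)}) forces $s \in U$, so $s \in U \cap S$ and $y \in \eta_X(U \cap S)$. Openness of $U$ is essential; the analogous equality can fail for arbitrary subsets, which is precisely the phenomenon behind the failure of the converses of corollaries \ref{eta is open} and \ref{eta is closed}.
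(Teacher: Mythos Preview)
Your proposal is correct and follows essentially the same route as the paper: the same map $f(\eta_S(s)) = \eta_X(s)$, the same argument that $\equiv_S$ and $\equiv_X$ agree on $S$ for well-definedness and injectivity, and the same key identity $\eta_X(U \cap S) = \eta_X(U) \cap \eta_X(S)$ for openness. The only differences are cosmetic: you obtain continuity of $f$ via the universal property of the quotient whereas the paper reads it off from the inverse of the same formula, and you spell out explicitly (using lemma \ref{indistinguishability equivalences} \rm{(v)}) the nontrivial inclusion in that set identity, which the paper passes over in its chain of equalities.
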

\begin{proof}
Let $\eta \colon X \to X/{\equiv}$ and $\eta_S \colon S \to S/{\equiv}$ be the quotient maps. Let $f \colon S/{\equiv} \to \eta(S)$, $f(\eta_S(x)) = \eta(x)$ for all $\eta_S(x) \in S/{\equiv}$. We show that $f$ is a homeomorphism when $\eta(S)$ is considered as a subspace of $X/{\equiv}$.

First, we note that for all $x, y \in S$,
\begin{align*}
f(\eta_S(x)) = f(\eta_S(y)) & \iff \eta(x) = \eta(y) \\
							& \iff x \in U \text{ iff } y \in U & \text{for all } U \text{ open in } X \\
							& \iff x \in U \cap S \text{ iff } y \in U \cap S & \text{for all } U \text{ open in } X \\
							& \iff \eta_S(x) = \eta_S(y).
\end{align*}
The implication from right to left shows that $f$ is well-defined; the implication from left to right shows that $f$ is injective.

For surjectivity, let $\eta(x) \in \eta(S)$. Then $x \in S$, and hence $\eta_S(x) \in S/{\equiv}$ and $\eta(x) = f(\eta_S(x))$.

The open sets of $S/{\equiv}$ are of the form $\eta_S(U \cap S)$, where $U$ is open in $X$. The open sets of $\eta(S)$ are of the form $\eta(U) \cap \eta(S)$. We note that
\begin{align*}
f(\eta_S(U \cap S)) & = \{f(\eta_S(x)) \mid x \in U \cap S\} \\
					& = \{\eta(x) \mid x \in U \cap S\} \\
					& = \{\eta(x) \mid x \in U\} \cap \{\eta(x) \mid x \in S\} \\
					& = \eta(U) \cap \eta(S).
\end{align*}
Since $f$ is bijective, this proves that $f^{-1}$ is continuous. Take the inverse image of both sides. By injectivity of $f$,
\[
\eta_S(U \cap S) = f^{-1}(\eta(U) \cap \eta(S)).
\]
Thus the preimage of every open set of $\eta(S)$ is open, and hence $f$ is continuous.
\end{proof}

\begin{theorem}
Let $\mathcal{I}$ be a set and $(X_i)_{i \in \mathcal{I}}$ a sequence of topological spaces. The spaces $\left(\prod_{i \in \mathcal{I}} X_i\right)/{\equiv}$ and $\prod_{i \in \mathcal{I}} X_i/{\equiv}$ are homeomorphic.
\end{theorem}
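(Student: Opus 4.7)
The plan is to construct the natural candidate map coordinatewise and verify it descends to a homeomorphism. Let $\eta_i \colon X_i \to X_i/{\equiv}$ denote the component quotient maps and $\eta \colon \prod_i X_i \to \left(\prod_i X_i\right)/{\equiv}$ the quotient map of the product. Define $\phi \colon \prod_{i \in \mathcal{I}} X_i \to \prod_{i \in \mathcal{I}} X_i/{\equiv}$ by $\phi((x_i)_i) = (\eta_i(x_i))_i$. Since each composite $\eta_i \circ \pi_i$ is continuous, $\phi$ is continuous into the product topology on the codomain, and it is surjective because each $\eta_i$ is.

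Next I would identify the fibers of $\phi$ with the $\equiv$-classes of the product. Using the standard subbasis $\{\pi_j^{-1}(U) : j \in \mathcal{I},\ U \subseteq X_j \text{ open}\}$ for the product topology together with lemma~\ref{indistinguishability equivalences}~(iv), two points $x, y \in \prod_i X_i$ satisfy $x \equiv y$ precisely when $x_j \in U \iff y_j \in U$ for every $j$ and every open $U \subseteq X_j$, equivalently when $x_j \equiv y_j$ in $X_j$ for every $j$, equivalently when $\phi(x) = \phi(y)$. Hence $\phi$ factors through $\eta$ as a well-defined bijection $\tilde\phi \colon \left(\prod_i X_i\right)/{\equiv} \to \prod_i X_i/{\equiv}$ with $\tilde\phi \circ \eta = \phi$, and by the defining property of the quotient topology $\tilde\phi$ inherits continuity from $\phi$.

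The main obstacle is showing $\tilde\phi$ is open. Since $\eta$ is surjective, every open set in $\left(\prod_i X_i\right)/{\equiv}$ has the form $\eta(V)$ for some open $V \subseteq \prod_i X_i$, and $\tilde\phi(\eta(V)) = \phi(V)$; so it suffices to show $\phi$ maps basic open sets to open sets. A basic open set of the product is $\prod_i U_i$ with each $U_i \subseteq X_i$ open and $U_i = X_i$ for all but finitely many $i$, and a direct computation gives $\phi\bigl(\prod_i U_i\bigr) = \prod_i \eta_i(U_i)$. Because each $\eta_i$ is open (corollary~\ref{eta is open}) and surjective, each factor $\eta_i(U_i)$ is open in $X_i/{\equiv}$, and $\eta_i(U_i) = X_i/{\equiv}$ whenever $U_i = X_i$. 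Thus $\phi\bigl(\prod_i U_i\bigr)$ is a basic open set of $\prod_i X_i/{\equiv}$, which together with the fact that $\phi$ preserves arbitrary unions shows that $\phi$, and hence $\tilde\phi$, is open. Surjectivity of the $\eta_i$ is critical here: without it, a factor $U_i = X_i$ could map to a proper subset of $X_i/{\equiv}$ and the image of a basic open set would fail to be basic.
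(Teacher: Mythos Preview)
Your proof is correct and follows essentially the same approach as the paper: both construct the natural coordinatewise map, identify the $\equiv$-classes of the product with the fibers via the componentwise indistinguishability criterion, and then verify bicontinuity using the key computation $\phi\bigl(\prod_i U_i\bigr) = \prod_i \eta_i(U_i)$ together with the openness of each $\eta_i$. The only cosmetic differences are that you invoke lemma~\ref{indistinguishability equivalences}~(iv) and the universal property of the quotient explicitly, whereas the paper asserts the fiber identification directly and deduces continuity of the map by taking preimages of the same basic-open-set formula.
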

\begin{proof}
Let $\eta$ be the quotient map from $\prod_{i \in \mathcal{I}} X_i$ to $\left(\prod_{i \in \mathcal{I}} X_i\right)/{\equiv}$, and let $\eta_i$ be the quotient map from $X_i$ to $X_i/{\equiv}$ for each $i \in \mathcal{I}$. Define a map $f \colon \left(\prod_{i \in \mathcal{I}} X_i\right)/{\equiv} \to \prod_{i \in \mathcal{I}} X_i/{\equiv}$ from the condition $f(\eta(z))(i) = \eta_i(z(i))$ for all $i \in \mathcal{I}$ and all $z \in \prod_{i \in \mathcal{I}} X_i$. The diagram below should commute. The maps $p_i$ and $\pi_i$ are the canonical projections.
\begin{figure}[!htbp]
\centering
\begin{tikzcd}[column sep = tiny, row sep = small]
 &  & X_i \arrow[drr, "\eta_i"] &  &  \\
\prod_{i \in \mathcal{I}}X_i \arrow[urr, "p_i"] \arrow[dr, "\eta"] &  &  &  & X_i/{\equiv} \\
 & \left(\prod_{i \in \mathcal{I}} X_i \right)/{\equiv} \arrow[rr, "f"] &  & \prod_{i \in \mathcal{I}} X_i/{\equiv} \arrow[ur, "\pi_i"] & 
\end{tikzcd}
\end{figure}

We show that $f$ is a homeomorphism. First, we note that for all $z_1, z_2 \in \prod_{i \in \mathcal{I}} X_i$,
\[
\eta(z_1) = \eta(z_2) \iff \eta_i(z_1(i)) = \eta_i(z_2(i)) \text { for all } i \in \mathcal{I}.
\]
The implication from left to right shows that $f$ is well-defined; the implication from right to left shows that $f$ is injective.

For surjectivity, let $q \in \prod_{i \in \mathcal{I}} X_i/{\equiv}$. For all $i \in \mathcal{I}$, $q(i) = \eta_i(x_i)$ for some $x_i \in X_i$. Let $z \in \prod_{i \in \mathcal{I}} X_i$ be such that $z(i) = x_i$ for all $i \in \mathcal{I}$. Then
\[
q(i) = \eta_i(x_i) = \eta_i(z(i)) = f(\eta(z))(i)
\]
for all $i \in \mathcal{I}$, and hence $q = f(\eta(z))$. We see that $q$ is the image of some $\eta(z) \in \left(\prod_{i \in \mathcal{I}} X_i\right)/{\equiv}$. Hence $f$ is surjective.

The basic open sets of $\prod_{i \in \mathcal{I}} X_i/{\equiv}$ are of the form $\prod_{i \in \mathcal{I}} \eta^\rightarrow_i(U_i)$, where each $U_i$ is open in $X_i$ and $U_i \neq X_i$ for only finitely many $i \in \mathcal{I}$. The basic open sets of $(\prod_{i \in \mathcal{I}} X_i)/{\equiv}$ are of the form $\eta^\rightarrow\left(\prod_{i \in \mathcal{I}} U_i\right)$. We note that
\[
f\left(\eta^\rightarrow\left(\prod_{i \in \mathcal{I}} U_i\right)\right) = \prod_{i \in \mathcal{I}} \eta^\rightarrow_i(U_i).
\]
Since $f$ is bijective, this proves that $f^{-1}$ is continuous. Take the inverse image of both sides. By injectivity of $f$,
\[
\eta^\rightarrow\left(\prod_{i \in \mathcal{I}} U_i\right) = f^{-1}\left(\prod_{i \in \mathcal{I}} \eta^\rightarrow_i(U_i)\right).
\]
Thus the preimage of every basic open set is open, and hence $f$ is continuous.
\end{proof}

\section{Separation and regularity axioms}

The \emph{separation axioms} are properties a topological space can have that guarantee the existence of disjoint neighbourhoods in various situations. The separation axioms are ordered so that $T_i$ implies $T_j$ whenever $i \geq j$. There is also another set of analogous properties called the \emph{regularity axioms} such that $T_i = R_{i-1} \land T_0$. In other words, a space satisfies $T_i$ if and only if it is a Kolmogorov quotient of a space that satisfies $R_{i-1}$. Table \ref{separation and regularity} shows the connection. Some authors require normal and regular spaces to be Hausdorff; we do not.

\begin{table}[!htb]
\centering
\caption{The connection between separation and regularity axioms}
\label{separation and regularity}
\begin{tabular}{|l|l|}
\hline
$X/{\equiv}$           				& $X$            						                            \\ \hline
Kolmogorov ($T_0$)                  & topological space                                                 \\ \hline
Fréchet ($T_1$)                     & symmetric ($R_0$)                                                 \\ \hline
Hausdorff ($T_2$)                   & preregular ($R_1$)                                                \\ \hline
regular Hausdorff ($T_3$)           & regular ($R_2$)                                                   \\ \hline
Tychonoff ($T_{3.5}$)               & completely regular ($R_{2.5}$)                                    \\ \hline
normal Hausdorff ($T_4$)            & normal regular ($R_3$)                                            \\ \hline
completely normal Hausdorff ($T_5$) & completely normal regular ($R_4$)                                 \\ \hline
perfectly normal Hausdorff ($T_6$)  & perfectly normal regular ($R_5$)                                  \\ \hline
\end{tabular}
\end{table}

A topological space $X$ is \emph{symmetric} if for all pairs of topologically distinguishable points $x, y \in X$, there are open sets $U$ and $V$ such that $x \in U$, $y \not \in U$ and $y \in V$, $x \not \in V$.

\begin{theorem} \label{symmetricity condition}
A space $X$ is symmetric if and only if $\eta(x) = \overline{\{x\}}$ for all $x \in X$.
\end{theorem}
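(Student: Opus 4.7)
The plan is to reduce the biconditional to the inclusion $\overline{\{x\}} \subseteq \eta(x)$, since corollary \ref{equivalence class subset of closure} already furnishes the opposite inclusion for every topological space. So the task becomes: show that symmetricity is equivalent to $\overline{\{x\}} \subseteq \eta(x)$ holding for all $x \in X$.

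For the forward direction, I would assume $X$ is symmetric and pick $y \in \overline{\{x\}}$. The argument proceeds by contrapositive on the equivalence relation: suppose $y \not\equiv x$. By the definition of symmetricity, there is an open set $V$ with $y \in V$ and $x \notin V$. But then $V$ is an open neighbourhood of $y$ missing $x$, contradicting $y \in \overline{\{x\}}$. Hence $y \equiv x$, i.e. $y \in \eta(x)$.

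For the backward direction, I would assume $\eta(z) = \overline{\{z\}}$ for every $z \in X$ and take any two topologically distinguishable points $x \not\equiv y$. Since $\equiv$ is symmetric as a relation, we also have $y \not\equiv x$, so $y \notin \eta(x) = \overline{\{x\}}$ and $x \notin \eta(y) = \overline{\{y\}}$. The complements $U := (\overline{\{y\}})^c$ and $V := (\overline{\{x\}})^c$ are then open sets witnessing symmetricity: $x \in U$ with $y \notin U$, and $y \in V$ with $x \notin V$.

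I do not expect any genuine obstacle here: the whole argument is essentially unpacking definitions, and the only mild subtlety is remembering to invoke the symmetry of the equivalence relation $\equiv$ in the backward direction so that a single hypothesis $\eta(z) = \overline{\{z\}}$ produces \emph{both} of the open sets required by the definition of a symmetric space. The existing corollary \ref{equivalence class subset of closure} keeps the proof short by sparing us from treating one of the two inclusions.
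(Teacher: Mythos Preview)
Your argument is correct. The backward direction (the hypothesis $\eta(z)=\overline{\{z\}}$ implies symmetry) is essentially identical to the paper's: both take complements of singleton closures as the witnessing open sets.

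The forward direction, however, differs. The paper proves that $\eta(x)$ is \emph{closed} by writing its complement as the union $\bigcup_{y\notin\eta(x)} V_{xy}$ of the open sets supplied by symmetry, and then combines closedness with $x\in\eta(x)$ and Corollary~\ref{equivalence class subset of closure} to get $\eta(x)=\overline{\{x\}}$. You instead do a direct element chase: if $y\in\overline{\{x\}}$ but $y\not\equiv x$, symmetry hands you an open $V$ with $y\in V$, $x\notin V$, immediately contradicting $y\in\overline{\{x\}}$. Your route is shorter and sidesteps the paper's preliminary case analysis (empty space, single equivalence class); the paper's route, on the other hand, yields the extra information that each $\eta(x)$ is closed as an intermediate step, which one might want elsewhere.
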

\begin{proof}
Suppose first that $\eta(x) = \overline{\{x\}}$ for all $x \in X$. A space with less than two points is always symmetric, so we may assume that there are $x, y \in X$ such that $x \not \equiv y$. Then $x \not \in \eta(y) = \overline{\{y\}}$ and $y \not \in \eta(x) = \overline{\{x\}}$. Equivalently, $x \in \overline{\{y\}}^c$ and $y \in \overline{\{x\}}^c$. We can choose $U = \overline{\{y\}}^c$ and $V = \overline{\{x\}}^c$.

Suppose then that $X$ is symmetric. If $X$ is the empty set, then the claim holds. If all points of $X$ are topologically indistinguishable, then all singleton sets have the same closure, which has to be $X$; hence the claim holds.

Thus we may assume that there are at least two equivalence classes with respect to $\equiv$. By the symmetricity of $X$, for all pairs of points $p, q \in X$, $p \not \equiv q$, there exists an open set $V_{pq}$ such that $q \in V_{pq}$ and $p \not \in V_{pq}$. Let $x \in X$ and
\[
W = \bigcup_{y \in [\eta(x)]^c} V_{xy}.
\]
Since $y \in V_{xy}$ for all $y \in [\eta(x)]^c$, we have $[\eta(x)]^c \subseteq W$. Also, $x \not \in V_{xy}$ for all $y \in [\eta(x)]^c$, so $x \not \in W$. As a union of open sets, $W$ is open. Since $W$ is not a neighbourhood of $x$, it cannot be a neighbourhood of any point of $\eta(x)$; hence $\eta(x) \cap W = \emptyset$. It follows that $W = [\eta(x)]^c$. Since $W$ is open, $\eta(x)$ is closed. Since $x \in \eta(x)$, we get $\eta(x) = \overline{\{x\}}$ from corollary \ref{equivalence class subset of closure}.
\end{proof}

\begin{corollary} \label{singletons are closed in T_1}
A space $X$ is $T_1$ if and only if $\{x\}$ is closed for all $x \in X$.
\end{corollary}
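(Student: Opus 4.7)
The plan is to exploit the decomposition $T_1 = R_0 \wedge T_0$ stated in the table, i.e.\ that a space is $T_1$ (Fréchet) precisely when it is both symmetric and Kolmogorov. In a $T_0$ space each equivalence class $\eta(x)$ collapses to the singleton $\{x\}$, and Theorem \ref{symmetricity condition} rewrites symmetricity as the assertion $\eta(x) = \overline{\{x\}}$. Combining these two observations should give exactly ``$\{x\}$ is closed for all $x \in X$'', in both directions.

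For the forward direction I would assume $X$ is $T_1$. Then $X$ is $T_0$, so $\eta(x) = \{x\}$ for every $x \in X$, and $X$ is symmetric, so by Theorem \ref{symmetricity condition} we have $\eta(x) = \overline{\{x\}}$. Chaining these two equalities gives $\{x\} = \overline{\{x\}}$, so $\{x\}$ is closed.

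For the converse I would assume $\{x\}$ is closed for all $x$, i.e.\ $\overline{\{x\}} = \{x\}$. Corollary \ref{equivalence class subset of closure} gives $\eta(x) \subseteq \overline{\{x\}} = \{x\}$, and since $x \in \eta(x)$ we get $\eta(x) = \{x\}$; this is the $T_0$ condition. Simultaneously, $\eta(x) = \{x\} = \overline{\{x\}}$, so Theorem \ref{symmetricity condition} hands us symmetricity. Together $T_0$ and $R_0$ yield $T_1$.

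There is no genuine obstacle here; the proof is essentially a bookkeeping exercise combining Theorem \ref{symmetricity condition} with Corollary \ref{equivalence class subset of closure} and the definitional remark that $\eta(x) = \{x\}$ in a $T_0$ space. The only thing to be slightly careful about is not to conflate ``symmetric'' with ``$T_1$'' — the former allows topologically indistinguishable pairs, and it is precisely this slack that the $T_0$ hypothesis removes.
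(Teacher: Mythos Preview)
Your proposal is correct and follows essentially the same argument as the paper: both directions use the decomposition $T_1 = T_0 \wedge R_0$, the observation that $T_0$ means $\eta(x)=\{x\}$, Theorem~\ref{symmetricity condition} for the equality $\eta(x)=\overline{\{x\}}$, and Corollary~\ref{equivalence class subset of closure} for the inclusion $\eta(x)\subseteq\overline{\{x\}}$ in the converse. The paper's write-up is slightly terser but the logic is identical.
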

\begin{proof}
Suppose first that $X$ is $T_1$. Then it is both Kolmogorov and symmetric. By Kolmogorovness, $\eta(x) = \{x\}$ for all $x \in X$. By symmetricity, $\eta(x) = \overline{\{x\}}$ for all $x \in X$. Hence $\{x\}$ is closed for all $x \in X$.

Suppose then that $\{x\}$ is closed for all $x \in X$. Then $\eta(x) \subseteq \overline{\{x\}} = \{x\} \subseteq \eta(x)$ for all $x \in X$, and hence $\eta(x) = \overline{\{x\}} = \{x\}$ for all $x \in X$. Thus $X$ is both Kolmogorov and symmetric, and hence $T_1$.
\end{proof}

A topological space $X$ is \emph{preregular} if for all pairs of topologically distinguishable points $x, y \in X$, there are open sets $U$ and $V$ such that $x \in U$, $y \in V$ and $U \cap V = \emptyset$.

\begin{theorem}
If $K_1$ and $K_2$ are disjoint compact subsets of a preregular topological space $X$ and do not have disjoint open neighbourhoods, then there exist $x_1 \in K_1$ and $x_2 \in K_2$ such that $x_1 \equiv x_2$.
\end{theorem}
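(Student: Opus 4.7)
The plan is to prove the contrapositive: if every pair $x_1 \in K_1$, $x_2 \in K_2$ is topologically distinguishable, then $K_1$ and $K_2$ admit disjoint open neighbourhoods. This adapts the standard Hausdorff argument that disjoint compacts can be separated, with preregularity used in place of the Hausdorff property.

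Assume $x_1 \not\equiv x_2$ for every $x_1 \in K_1$ and $x_2 \in K_2$. By preregularity, for each such pair I can choose disjoint open sets $U_{x_1,x_2} \ni x_1$ and $V_{x_1,x_2} \ni x_2$. Fix $x_1 \in K_1$ first. The family $\{V_{x_1,x_2}\}_{x_2 \in K_2}$ is an open cover of $K_2$, so by compactness of $K_2$ there is a finite subcover indexed by points $x_2^{(1)}, \dots, x_2^{(n)}$. Set
\[
V_{x_1} = \bigcup_{i=1}^n V_{x_1, x_2^{(i)}}, \qquad U_{x_1} = \bigcap_{i=1}^n U_{x_1, x_2^{(i)}}.
\]
Then $V_{x_1}$ is an open neighbourhood of $K_2$, $U_{x_1}$ is an open neighbourhood of $x_1$, and $U_{x_1} \cap V_{x_1} = \emptyset$ since each $U_{x_1,x_2^{(i)}}$ avoids the corresponding $V_{x_1,x_2^{(i)}}$.

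Now run the same step one level up. The family $\{U_{x_1}\}_{x_1 \in K_1}$ is an open cover of $K_1$, so by compactness of $K_1$ there is a finite subcover indexed by $x_1^{(1)}, \dots, x_1^{(m)}$. Set
\[
U = \bigcup_{j=1}^m U_{x_1^{(j)}}, \qquad V = \bigcap_{j=1}^m V_{x_1^{(j)}}.
\]
Then $U \supseteq K_1$ and $V \supseteq K_2$ are open, and $U \cap V = \emptyset$ because each $U_{x_1^{(j)}}$ is disjoint from the corresponding $V_{x_1^{(j)}} \supseteq V$. This contradicts the hypothesis that $K_1$ and $K_2$ have no disjoint open neighbourhoods, completing the proof.

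There is no real obstacle here; the argument is essentially the textbook two-stage compactness/shrinking routine, and the only place preregularity (rather than Hausdorffness) enters is the very first invocation, where one needs $x_1 \not\equiv x_2$ to obtain the disjoint separating opens. The structure of the statement is tailored precisely to this: preregularity guarantees separation exactly for distinguishable pairs, so failure of separation of the compacta must be blamed on the existence of an indistinguishable pair, one point in each.
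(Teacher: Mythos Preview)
Your proof is correct. The two-stage compactness argument works exactly as you describe, and the only delicate point---that preregularity separates precisely the distinguishable pairs---is handled correctly by the contrapositive framing.

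The paper takes a different route: it passes to the Kolmogorov quotient $X/{\equiv}$, which is Hausdorff because $X$ is preregular. The images $\eta(K_1)$ and $\eta(K_2)$ are compact; if they were disjoint, the cited fact that disjoint compacts in a Hausdorff space have disjoint open neighbourhoods would yield open $U_1 \supseteq \eta(K_1)$, $U_2 \supseteq \eta(K_2)$ with $U_1 \cap U_2 = \emptyset$, and pulling back along $\eta$ would separate $K_1$ and $K_2$. Hence $\eta(K_1) \cap \eta(K_2) \neq \emptyset$, giving the required indistinguishable pair. Your argument is more self-contained---it unfolds the Hausdorff separation-of-compacts lemma inside $X$ rather than quoting it---while the paper's version is shorter and, fittingly for a survey on Kolmogorov quotients, illustrates how the quotient reduces preregular questions to Hausdorff ones.
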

\begin{proof}
Since $X$ is preregular, $X/{\equiv}$ is Hausdorff. The sets $\eta(K_1)$ and $\eta(K_2)$ are compact by continuity of $\eta$. We show that $\eta(K_1) \cap \eta(K_2) \neq \emptyset$. Suppose to the contrary that $\eta(K_1)$ and $\eta(K_2)$ are disjoint. Disjoint compact subsets of a Hausdorff space have disjoint open neighbourhoods (\cite{Engelking}, p.~124); denote these neighbourhoods by $U_1$ and $U_2$, so that $\eta(K_1) \subseteq U_1$ and $\eta(K_2) \subseteq U_2$. Then $K_1 \subseteq \eta^{-1}(\eta(K_1)) \subseteq \eta^{-1}(U_1)$ and similarly $K_2 \subseteq \eta^{-1}(U_2)$. But $\eta^{-1}(U_1)$ and $\eta^{-1}(U_2)$ are disjoint, and they are open by the continuity of $\eta$. This contradicts the assumption that $K_1$ and $K_2$ do not have disjoint open neighbourhoods. Therefore, $\eta(K_1) \cap \eta(K_2) \neq \emptyset$. Hence there is some $\eta(z) \in \eta(K_1) \cap \eta(K_2)$ such that $\eta(z) = \eta(x) = \eta(y)$ for some $x \in K_1$ and $y \in K_2$.
\end{proof}

\section{Pseudometrics and seminorms} \label{Pseudometrics and seminorms}

In example \ref{L^p}, the Kolmogorov quotient map transformed a seminorm into a norm. In this section we see that this happens for all seminorms.

\begin{definition}
Let $X$ be a set. A \emph{pseudometric on $X$} is a map $d \colon X^2 \to \R$ such that
\begin{enumerate}
\item $d(x,y) \geq 0$ for all $x, y \in X$;
\item $d(x,x) = 0$ for all $x \in X$;
\item $d(x,y) = d(y,x)$ for all $x, y \in X$;
\item $d(x,z) \leq d(x,y) + d(y,z)$ for all $x, y, z \in X$ (\emph{triangle inequality}).
\end{enumerate}
A pseudometric $d$ is a \emph{metric} if $d(x,y) = 0$ implies that $x = y$.
\end{definition}

Let $d$ be a (pseudo)metric on $X$, $x \in X$ and $r$ a positive real number. The set
\[
B(x,r) = \{y \in X \mid d(x,y) < r\}
\]
is the \emph{open ball of radius $r$ centered at $x$}. The set $\{B(x,r) \mid x \in X, r > 0\}$ is a basis for a topology on $X$ (\cite{Munkres}, p.~119). The resulting topological space is called a \emph{(pseudo)metric space} and can be denoted by $(X,d)$. If $x,y \in X$ are such that $d(x,y) = r > 0$, then $x \in B(x,r)$, but $y \not \in B(x,r)$. By lemma \ref{indistinguishability equivalences} (\rm{iii}), $x \equiv y$ if and only if $d(x,y) = 0$.

\begin{example}
Pseudometrics can be used in the context of cellular automata. Given a finite set $A$, let $A^\Z$ denote the set of functions from $\Z$ to $A$. For $x \in A^\Z$, we write $x_j$ for $x(j)$. Also, for $n$, $k \in \Z$, let $[n,k]$ denote the set of integers $m$ such that $n \leq m \leq k$. Finally, for sequences $(a_n)_{n=0}^\infty$ of natural numbers, denote
\[
\limsup_{n \to \infty} a_n = \lim_{n \to \infty} \left( \sup_{m \geq n} a_m \right).
\]
Then
\[
d_B(x,y) = \limsup_{l \to \infty} \frac{\abs{\{j \in [-\el, \el] \mid x_j \neq y_j\}}}{2\el +1}
\]
is the \emph{Besicovitch pseudometric} on $A^\Z$, and
\[
d_W(x,y) = \limsup_{l \to \infty} \max_{k \in \Z} \frac{\abs{\{j \in [k + 1, k + \el] \mid x_j \neq y_j\}}}{\el}
\]
is the \emph{Weyl pseudometric} on $A^\Z$. The topologies induced by these pseudometrics have some advantages to the standard approach, where $A$ is given the discrete topology and $A^\Z$ the product topology; for example, the class of continuous functions from $A^\Z$ to itself is larger (\cite{pseudometrics}).
\end{example}



\begin{theorem} \label{pseudometric to metric}
Let $(X,d)$ be a pseudometric space. Then $d^* \colon (X/{\equiv})^2 \to \R$, $d^*(\eta(x), \eta(y)) = d(x,y)$ for all $x, y \in X$, is a metric on $X/{\equiv}$ that determines the same topology as the quotient map.
\end{theorem}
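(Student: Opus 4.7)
The plan is to verify four things in order: that $d^*$ is well-defined, that it inherits the pseudometric axioms from $d$, that the extra separation condition upgrades it to a metric, and finally that the metric topology it induces on $X/{\equiv}$ coincides with the quotient topology.

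First I would tackle well-definedness, which I expect to be the main technical step. Suppose $\eta(x) = \eta(x')$ and $\eta(y) = \eta(y')$; using the observation made just before the theorem statement (that $x \equiv y$ iff $d(x,y) = 0$), we have $d(x,x') = d(y,y') = 0$. Two applications of the triangle inequality then give
\[
d(x,y) \leq d(x,x') + d(x',y') + d(y',y) = d(x',y'),
\]
and by symmetry $d(x',y') \leq d(x,y)$, so $d(x,y) = d(x',y')$. Hence $d^*$ does not depend on the chosen representatives.

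Once $d^*$ is well-defined, the four pseudometric axioms transfer immediately from $d$ by applying them to representatives. For the metric property, note that $d^*(\eta(x),\eta(y)) = 0$ iff $d(x,y)=0$ iff $x \equiv y$ iff $\eta(x) = \eta(y)$.

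For the topological equivalence, let $\tau_q$ be the quotient topology and $\tau_{d^*}$ the topology generated by the balls $B^*(\eta(x),r) = \{\eta(y) \mid d^*(\eta(x),\eta(y)) < r\}$. The key computation is that
\[
\eta^{-1}(B^*(\eta(x),r)) = \{y \in X \mid d(x,y) < r\} = B(x,r),
\]
and dually $\eta(B(x,r)) = B^*(\eta(x),r)$ (using that $d^*$ is well-defined). The first identity shows $B^*(\eta(x),r) \in \tau_q$, so $\tau_{d^*} \subseteq \tau_q$. For the reverse inclusion, take $U \in \tau_q$ and $\eta(x) \in U$; then $x \in \eta^{-1}(U)$, which is open in $X$, so some $B(x,r) \subseteq \eta^{-1}(U)$. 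Any $\eta(y) \in B^*(\eta(x),r)$ satisfies $d(x,y) < r$, hence $y \in \eta^{-1}(U)$ and $\eta(y) \in U$, giving $B^*(\eta(x),r) \subseteq U$. Thus $U \in \tau_{d^*}$, and the two topologies agree. (Alternatively, the openness of $\eta$ from corollary \ref{eta is open} combined with $\eta(B(x,r)) = B^*(\eta(x),r)$ gives one inclusion immediately.)
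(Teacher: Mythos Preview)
Your proof is correct and follows essentially the same route as the paper: triangle inequality for well-definedness, transfer of the pseudometric axioms to representatives, the observation $d(x,y)=0 \iff x\equiv y$ for the metric property, and the correspondence $\eta^{-1}(B^*(\eta(x),r))=B(x,r)$ for the topological equivalence. The paper handles the last point in a single sentence (``the open balls correspond to those of $X$''), whereas you spell out both inclusions $\tau_{d^*}\subseteq\tau_q$ and $\tau_q\subseteq\tau_{d^*}$ explicitly; this is extra detail rather than a different method.
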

\begin{proof}
To show that $d^*$ is well-defined, let $x_1, x_2 \in \eta(x)$ and $y_1, y_2 \in \eta(y)$. Using the triangle inequality, we see that
\[
d(x_1, y_1) \leq d(x_1, x_2) + d(x_2, y_1) = d(x_2, y_1) \leq d(x_2, y_2) + d(y_2, y_1) = d(x_2, y_2),
\]
and by a symmetric argument we can prove that $d(x_2, y_2) \leq d(x_1, y_1)$. Thus $d(x_1, y_1) = d(x_2, y_2)$ and $d^*$ is well-defined.

It is easy to see that $d^*$ is a pseudometric on $X/{\equiv}$ by reducing each part of the definition to the corresponding property of $d$. For example, the triangle inequality can be shown as follows: for all $x, y, z \in X$,
\[
d^*(\eta(x),\eta(z)) = d(x,z) \leq d(x,y) + d(y,z) = d^*(\eta(x),\eta(y)) + d^*(\eta(y), \eta(z)).
\]

Since $d(x,y) = 0$ implies $\eta(x) = \eta(y)$ and $d^*(\eta(x), \eta(y)) = d(x,y)$, we see that $d^*$ is a metric. The open balls correspond to those of $X$, so the topology $d^*$ determines is precisely that determined by the quotient map.
\end{proof}

The space $(X/{\equiv}, d^*)$ is called the \emph{metric identification} of $(X,d)$.

We will use the following corollary of the triangle inequality later.

\begin{lemma}[Reverse triangle inequality]
For all $x,y,z \in (X,d)$,
\[
\abs{d(x,z) - d(y,z)} \leq d(x,y).
\]
\end{lemma}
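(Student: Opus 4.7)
The plan is to obtain the absolute-value bound by deriving the two one-sided inequalities $d(x,z) - d(y,z) \le d(x,y)$ and $d(y,z) - d(x,z) \le d(x,y)$ separately, each directly from the ordinary triangle inequality, and then combining them.

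First I would apply the triangle inequality to the triple $(x,y,z)$, which gives $d(x,z) \le d(x,y) + d(y,z)$. Subtracting $d(y,z)$ from both sides yields $d(x,z) - d(y,z) \le d(x,y)$.

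Next, I would apply the triangle inequality to the triple $(y,x,z)$, giving $d(y,z) \le d(y,x) + d(x,z)$. Using symmetry of $d$, namely $d(y,x) = d(x,y)$, and subtracting $d(x,z)$ from both sides, I get $d(y,z) - d(x,z) \le d(x,y)$.

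Together the two bounds say that both $d(x,z) - d(y,z)$ and its negative are at most $d(x,y)$, which is exactly the definition of $|d(x,z) - d(y,z)| \le d(x,y)$. There is no real obstacle here; the only thing worth noting is that the argument uses the symmetry axiom in addition to the triangle inequality, and it makes no use of the condition distinguishing a metric from a pseudometric, so the lemma as stated applies to the pseudometric setting without change.
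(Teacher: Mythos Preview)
Your proof is correct and essentially identical to the paper's: both derive the two one-sided inequalities $d(x,z)-d(y,z)\le d(x,y)$ and $d(y,z)-d(x,z)\le d(x,y)$ directly from the triangle inequality (with an appeal to symmetry for the second), and then combine them to get the absolute-value bound.
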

\begin{proof}
(\cite{Searcoid}, p.~3) From the triangle inequality, we get
\[
\begin{cases}
  d(x,z) \leq d(x,y) + d(y,z), \\
  d(y,z) \leq d(y,x) + d(x,z),
\end{cases}
\]
which can be rearranged to
\[
\begin{cases}
  d(x,z) - d(y,z) \leq d(x,y), \\
  d(y,z) - d(x,z) \leq d(x,y).
\end{cases}
\]
The claim follows.
\end{proof}

\begin{definition} \label{seminorm}
Let $K$ be a subfield of $\C$ and $V$ a vector space over $K$. A map $\norm \cdot \colon V \to \R$ is a \emph{seminorm} on $V$ if
\begin{enumerate}
\item $\norm x \geq 0$ for all $x \in V$;
\item $\norm{\lambda x} = \abs \lambda \norm x$ for all $\lambda \in K$ and $x \in V$;
\item $\norm{x + y} \leq \norm x + \norm y$ for all $x, y \in V$.
\end{enumerate}
A seminorm is a \emph{norm} if $\norm x = 0$ implies that $x$ is the zero vector.
\end{definition}

A (semi)norm on $V$ induces a (pseudo)metric on $V$ by defining $d(x,y) = \norm{x-y}$ for all $x,y \in X$. The resulting topological space is called a \emph{(semi-) \\ normed vector space} and can be denoted by $(V, \norm \cdot)$.

\begin{lemma} For all $x, y \in (V,\norm \cdot)$,
\[
\abs{\norm x - \norm y} \leq \norm{x - y}.
\]
\end{lemma}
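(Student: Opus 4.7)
The plan is to deduce this statement directly from the reverse triangle inequality for pseudometrics that was just proved. Recall that the seminorm $\norm{\cdot}$ on $V$ induces a pseudometric $d$ via $d(u,v) = \norm{u - v}$, so in particular $\norm{x} = \norm{x - 0} = d(x, 0)$ and $\norm{y} = d(y, 0)$. Substituting $z = 0$ into the reverse triangle inequality for pseudometrics, we obtain
\[
\abs{\norm{x} - \norm{y}} = \abs{d(x,0) - d(y,0)} \leq d(x,y) = \norm{x - y},
\]
which is exactly the claim.

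Alternatively, one can prove this directly from the seminorm axioms, mirroring the pseudometric argument. Writing $x = (x - y) + y$ and applying subadditivity (axiom 3) yields $\norm{x} \leq \norm{x - y} + \norm{y}$, hence $\norm{x} - \norm{y} \leq \norm{x - y}$. Exchanging $x$ and $y$ gives $\norm{y} - \norm{x} \leq \norm{y - x}$, and by homogeneity (axiom 2 with $\lambda = -1$) we have $\norm{y - x} = \abs{-1}\norm{x - y} = \norm{x - y}$. Combining the two bounds produces the desired inequality.

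There is no real obstacle here; the only subtle point is remembering that $\norm{-v} = \norm{v}$ requires axiom 2, not merely the triangle inequality. Since the paper has already displayed the full pseudometric version, I would present the short reduction as the official proof and perhaps remark on the direct derivation, to emphasize that seminormed vector spaces are special cases of pseudometric spaces.
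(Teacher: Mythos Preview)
Your proof is correct and matches the paper's approach exactly: the paper's proof is the single line ``The claim follows from the reverse triangle inequality by substituting $z = 0$,'' which is precisely your first argument. Your additional direct derivation from the seminorm axioms is fine but unnecessary here.
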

\begin{proof}
The claim follows from the reverse triangle inequality by substituting $z = 0$.
\end{proof}

The following theorem is analogous to theorem \ref{pseudometric to metric}.

\begin{theorem} \label{seminorm theorem}
Let $(V, \norm \cdot)$ be a seminormed vector space. Then $(V/{\equiv}, \norm \cdot^*)$ is a normed vector space, where
\begin{align*}
\lambda \eta(x) = \eta(\lambda x) 	& \qquad \text{for all } \lambda \in K, x \in V, \\
\eta(x) + \eta(y) = \eta(x + y) 	& \qquad \text{for all } x, y \in V,
\end{align*}
and
\begin{align*}
\norm{\eta(x) }^* = \norm{x} & \qquad \text{for all } x \in V.
\end{align*}
Furthermore, $\norm \cdot^*$ determines the same topology as the quotient map.
\end{theorem}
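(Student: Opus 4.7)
The plan is to piggyback on Theorem \ref{pseudometric to metric} by first observing that the seminorm $\norm\cdot$ induces a pseudometric $d(x,y) = \norm{x-y}$ on $V$, so that $x \equiv y$ is equivalent to $\norm{x - y} = 0$. This single characterization will be the workhorse for every well-definedness check.

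First I would verify that the vector space operations descend to $V/{\equiv}$. If $x_1 \equiv x_2$ and $y_1 \equiv y_2$, then by the triangle inequality
\[
\norm{(x_1 + y_1) - (x_2 + y_2)} \leq \norm{x_1 - x_2} + \norm{y_1 - y_2} = 0,
\]
so $x_1 + y_1 \equiv x_2 + y_2$. Similarly, for $\lambda \in K$, the identity $\norm{\lambda x_1 - \lambda x_2} = \abs\lambda \norm{x_1 - x_2} = 0$ gives $\lambda x_1 \equiv \lambda x_2$. Hence the formulas in the theorem statement unambiguously define addition and scalar multiplication on $V/{\equiv}$, and the vector space axioms transfer directly from $V$ by choosing representatives (with $\eta(0)$ as the zero vector). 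Well-definedness of $\norm{\cdot}^*$ follows from the reverse triangle inequality (the lemma just above the theorem): if $\eta(x) = \eta(y)$, then $\abs{\norm x - \norm y} \leq \norm{x-y} = 0$, so $\norm x = \norm y$.

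Next I would verify that $\norm\cdot^*$ is a seminorm by reducing each of the three axioms to the corresponding property of $\norm\cdot$ on representatives (mirroring exactly what was done for $d^*$ in Theorem \ref{pseudometric to metric}). To upgrade this to a norm, suppose $\norm{\eta(x)}^* = 0$. Then $\norm x = 0$, which means $d(x, 0) = 0$, hence $x \equiv 0$, hence $\eta(x) = \eta(0)$, i.e. $\eta(x)$ is the zero vector of $V/{\equiv}$.

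Finally, for the topology claim, I would note that the metric on $V/{\equiv}$ induced by $\norm\cdot^*$ is
\[
(\eta(x), \eta(y)) \mapsto \norm{\eta(x) - \eta(y)}^* = \norm{\eta(x-y)}^* = \norm{x - y} = d^*(\eta(x), \eta(y)),
\]
i.e. exactly the metric $d^*$ from Theorem \ref{pseudometric to metric}. That theorem already tells us $d^*$ determines the quotient topology, so the same holds for the norm $\norm\cdot^*$. I do not anticipate a real obstacle here; the only slightly delicate step is being careful that scalar multiplication respects $\equiv$, which is why checking well-definedness of the algebraic structure comes first and separately from checking it for $\norm\cdot^*$.
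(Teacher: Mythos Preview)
Your proposal is correct and follows essentially the same route as the paper: well-definedness of $\norm\cdot^*$ via the reverse triangle inequality, the seminorm axioms by reduction to representatives, and the topology claim by identifying the induced metric with the $d^*$ of Theorem~\ref{pseudometric to metric}. If anything, you are slightly more careful than the paper, which leaves the well-definedness of addition and scalar multiplication on $V/{\equiv}$ implicit, whereas you check it explicitly.
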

\begin{proof}
It is straightforward to verify that $V/{\equiv}$ satisfies the axioms of a vector space, for example: for all $\alpha \in K$, $x, y \in V$,
\begin{align*}
		& \alpha[\eta(x) + \eta(y)] \\
= {}	& \alpha \eta(x + y) \\
= {}	& \eta(\alpha(x+y)) \\
= {}	& \eta(\alpha x + \alpha y) \\
= {}	& \eta(\alpha x) + \eta(\alpha y) \\
= {}	& \alpha \eta(x) + \alpha \eta(y).
\end{align*}

Let $d$ be the metric that $\norm \cdot$ induces on $V$. To see that $\norm \cdot^*$ is well-defined, we note that if $x \equiv y$, then $\norm{x-y} = d(x,y) = 0$. Then
\[
0 \leq \abs{\norm x - \norm y} \leq \norm{x - y} = 0,
\]
so $\norm x - \norm y = 0$, that is, $\norm x = \norm y$.

It is easy to see that $\norm \cdot^*$ is a seminorm by reducing each part of the definition to the corresponding property of $\norm \cdot$. For example, for all $x, y \in V$,
\[
\norm{\eta(x) + \eta(y)}^* = \norm{\eta(x+y)}^* = \norm{x + y} \leq \norm x + \norm y = \norm{\eta(x)}^* + \norm{\eta(y)}^*.
\]

Let $\mathbf{0}$ be the zero vector of $V$. If $x \not \equiv \mathbf{0}$, then there is an open ball in $V$ that contains $x$ but not $\mathbf{0}$, or vice versa. In either case, $d(x,\mathbf{0}) > 0$, and
\[
\norm{\eta(x)}^* = \norm x = \norm{x-\mathbf{0}} = d(x,\mathbf{0}) > 0,
\]
which proves that $\norm \cdot ^*$ is a norm. For the metric $d^*$ induced by $\norm \cdot ^*$, we have
\[
d^*(\eta(x), \eta(y)) = \norm{\eta(x) - \eta(y)}^* = \norm{\eta(x - y)}^* = \norm{x - y} = d(x,y),
\]
for all $x, y \in V$. By theorem \ref{pseudometric to metric}, the topology determined by $d^*$ is the same as that determined by the quotient map.
\end{proof}

In section \ref{Uniform spaces}, we will see that pseudometric spaces are completely regular, and consequently their Kolmogorov quotients are Tychonoff (see table \ref{separation and regularity}).

\section{Topological groups}

A \emph{topological group} is a group $G$ with a topology on $G$ that makes multiplication and inversion continuous. More specifically, we want the maps $g_1 \colon G \times G \to G$, $g_1(x,y) = xy$ and $g_2 \colon G \to G$, $g_2(x) = x^{-1}$ to be continuous, when $G \times G$ is given the product topology. Since $g_2$ is bijective and its own inverse, it is a homeomorphism if it is continuous. Our aim is to show that for all $x$, $y \in G$, $x \equiv y$ if and only if $y^{-1}x \in \overline{\{1\}}$, and that $G/{\equiv}$ is a topological group with a naturally arising group operation.

Every group is a topological group, when endowed with the discrete or with the trivial topology. If $\R$ is given the euclidean topology, then $(\R, +)$ and $(\R \setminus \{0\}, \cdot)$ are topological groups.

Throughout this section, $G$ is a topological group. For $A$, $B \subseteq G$ and $a \in G$, we denote
\begin{enumerate}
\item $AB = \{xy \mid x \in A, y \in B\}$;
\item $Aa = A\{a\} = \{xa \mid x \in A\}$;
\item $aA = \{a\}A = \{ax \mid x \in A\}$;
\item $A^{-1} = \{x^{-1} \mid x \in A\}$.
\end{enumerate}

We note that $g_1$ is continuous if and only if
\begin{equation} \label{g_1}
\Forall x, y \in G \colon \Forall W \in \mathcal{N}(xy) \colon \Exists U \in \mathcal{N}(x) \colon \Exists V \in \mathcal{N}(y) \colon UV \subseteq W.
\end{equation}
Similarly, $g_2$ is continuous if and only if
\[
\Forall x \in G \colon \Forall W \in \mathcal{N}(x^{-1}) \colon \Exists U \in \mathcal{N}(x) \colon U^{-1} \subseteq W.
\]

\begin{lemma} \label{g_3}
The maps $g_1$ and $g_2$ are continuous if and only if the map $g_3 \colon G \times G \to G$ defined by $g_3(x,y) = xy^{-1}$ is continuous.
\end{lemma}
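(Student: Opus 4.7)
The plan is to prove the equivalence by expressing each of $g_1, g_2, g_3$ as a composition of the others together with a few standard ``building block'' continuous maps on product spaces, so that continuity transfers along the compositions. The key auxiliary facts I will use are:
\begin{enumerate}
\item If $f \colon X \to Y$ and $h \colon X \to Z$ are continuous, then $(f, h) \colon X \to Y \times Z$ is continuous (universal property of the product topology).
\item Constant maps and identity maps are continuous.
\item Compositions of continuous maps are continuous.
\end{enumerate}

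For the forward direction, I assume $g_1$ and $g_2$ are continuous and observe the factorisation
\[
g_3(x,y) = x \cdot y^{-1} = g_1\bigl(x, g_2(y)\bigr).
\]
So $g_3 = g_1 \circ (\id_G \times g_2)$, where $\id_G \times g_2 \colon G \times G \to G \times G$ sends $(x,y)$ to $(x, y^{-1})$. The product map $\id_G \times g_2$ is continuous because its two component maps $(x,y) \mapsto x$ and $(x,y) \mapsto g_2(y)$ are continuous (projections composed with continuous maps). Hence $g_3$ is continuous as a composition of continuous maps.

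For the reverse direction, I assume $g_3$ is continuous and first recover $g_2$. Consider the map $\iota \colon G \to G \times G$, $\iota(x) = (1, x)$, which is continuous since its components $x \mapsto 1$ and $x \mapsto x$ are continuous. Then
\[
g_2(x) = x^{-1} = 1 \cdot x^{-1} = g_3(1, x) = (g_3 \circ \iota)(x),
\]
so $g_2$ is continuous. Knowing this, $\id_G \times g_2$ is continuous by the product argument used above, and
\[
g_1(x,y) = x \cdot y = x \cdot (y^{-1})^{-1} = g_3\bigl(x, g_2(y)\bigr) = \bigl(g_3 \circ (\id_G \times g_2)\bigr)(x,y),
\]
so $g_1$ is continuous as well.

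There is no real obstacle here; the whole argument is formal manipulation of compositions, and the only thing worth double-checking is that the product topology makes the pairing maps like $\id_G \times g_2$ and $\iota$ continuous, which is a standard preliminary. I would not invoke the $\epsilon$--$\delta$-style neighbourhood characterisations (\ref{g_1}) in the proof itself, since they would only duplicate the content of the universal property of the product topology.
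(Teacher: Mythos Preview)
Your proof is correct. The underlying idea is the same as the paper's: both directions rest on the factorisations $g_3(x,y) = g_1(x, g_2(y))$ and $g_2(x) = g_3(1,x)$. The difference is purely in packaging. The paper works at the level of the neighbourhood characterisation (\ref{g_1}), literally substituting $y \mapsto y^{-1}$ and $x \mapsto 1$ into that formula and using that $g_2$ is a homeomorphism to translate $\mathcal{N}(y^{-1})$ into $\mathcal{N}(y)$; you instead express the same substitutions as precomposition with the continuous maps $\id_G \times g_2$ and $\iota$, and then appeal once to ``composition of continuous maps is continuous''. Your version is cleaner and avoids manipulating quantifiers, at the cost of invoking the universal property of the product; the paper's version is more hands-on but self-contained given that (\ref{g_1}) has already been stated. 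Neither approach gains any real generality over the other.
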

\begin{proof}
Suppose first that $g_1$ and $g_2$ are continuous. In (\ref{g_1}), substitute $y \mapsto y^{-1}$. The first $y^{-1}$ can be changed back to $y$, since quantifying over all $y^{-1} \in G$ is the same as quantifying over all $y \in G$. Since $g_2$ is a homeomorphism, $\Exists V \in \mathcal{N}(y^{-1}) \colon UV \subseteq W$ is equivalent to $\Exists V \in \mathcal{N}(y) \colon UV^{-1} \subseteq W$. Hence
\[
\Forall x, y \in G \colon \Forall W \in \mathcal{N}(xy^{-1}) \colon \Exists U \in \mathcal{N}(x) \colon \Exists V \in \mathcal{N}(y) \colon UV^{-1} \subseteq W,
\]
so $g_3$ is continuous.

Suppose then that $g_3$ is continuous. By substituting $x \mapsto 1$ we get the continuity of $g_2$. Then $g_2$ is a homeomorphism, and we can follow the preceding proof in the opposite direction to show the continuity of $g_1$.
\end{proof}

\begin{theorem}
Let $a \in G$. The functions $r_a \colon G \to G$ and $l_a \colon G \to G$ defined by $r_a(x) = xa$ and $l_a(x) = ax$ are homeomorphisms.
\end{theorem}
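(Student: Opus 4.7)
The plan is to verify the three pieces of a homeomorphism separately: bijectivity, continuity, and continuity of the inverse. The main observation is that the obvious candidate for an inverse is multiplication by $a^{-1}$, and continuity will fall out immediately by composing with the already-continuous multiplication map $g_1$.

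First I would handle bijectivity by computing directly: since $r_a(r_{a^{-1}}(x)) = (xa^{-1})a = x$ and $r_{a^{-1}}(r_a(x)) = (xa)a^{-1} = x$ (using associativity and the group axioms), the map $r_{a^{-1}}$ is a two-sided inverse for $r_a$, so $r_a$ is bijective with $r_a^{-1} = r_{a^{-1}}$. The identical calculation with left and right swapped gives $l_a^{-1} = l_{a^{-1}}$.

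Next I would establish continuity via a factorization. Define $\iota_a \colon G \to G \times G$ by $\iota_a(x) = (x, a)$. Its two coordinate maps are the identity on $G$ and the constant map with value $a$, both of which are continuous, so $\iota_a$ is continuous by the universal property of the product topology. Then
\[
r_a(x) = xa = g_1(x, a) = g_1(\iota_a(x)),
\]
so $r_a = g_1 \circ \iota_a$ is a composition of continuous maps and hence continuous. The same argument, using $\iota'_a(x) = (a, x)$, shows that $l_a$ is continuous.

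Finally, applying the same reasoning to $a^{-1}$ in place of $a$ shows that $r_{a^{-1}}$ and $l_{a^{-1}}$ are continuous, so the inverses of $r_a$ and $l_a$ are continuous. Therefore $r_a$ and $l_a$ are homeomorphisms. I do not foresee a genuine obstacle here; the only care needed is to invoke the product topology when arguing that $\iota_a$ is continuous, rather than trying to push quantifier-heavy neighbourhood conditions like (\ref{g_1}) through by hand.
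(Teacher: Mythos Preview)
Your proof is correct and follows the same overall skeleton as the paper's: bijectivity via the explicit inverse $r_{a^{-1}}$, continuity of $r_a$, then continuity of the inverse by applying the same reasoning to $a^{-1}$. The one place you diverge is the continuity step: the paper does exactly what you chose to avoid, namely it invokes the neighbourhood condition~(\ref{g_1}) directly to produce $U \in \mathcal{N}(x)$ with $Ua \subseteq W$, whereas you factor $r_a = g_1 \circ \iota_a$ through the product and appeal to the universal property. Your route is a bit slicker and avoids any quantifier bookkeeping; the paper's route is more hands-on but stays entirely within the neighbourhood language already set up. Either is perfectly standard.
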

\begin{proof}
(\cite{Spivak}, p.~12) We prove the claim for $r_a$. The claim for $l_a$ is analogous.

To show surjectivity, let $y \in G$. Since $r_a(ya^{-1}) = ya^{-1}a = y$, $r_a$ is surjective.

To show injectivity, suppose $r_a(x) = r_a(y)$. Then $xa = ya$ and multiplying from the right by $a^{-1}$ gives $x = y$. Hence $r_a$ is injective.

To show continuity, let $W \in \mathcal{N}(xa)$. By continuity of $g_1$, there exists $U \in \mathcal{N}(x)$ and $V \in \mathcal{N}(a)$ such that $UV \subseteq W$. In particular, $r_a(U) = Ua \subseteq W$. Therefore, $r_a$ is continuous. 

Finally, to show that $r_a^{-1}$ is continuous, note that
\[
r_{a^{-1}}(r_a(x)) = xaa^{-1} = x = xa^{-1}a = r_a(r_a^{-1}(x)).
\]
Hence $r_a^{-1} = r_{a^{-1}}$, which is continuous by the preceding argument.
\end{proof}

\begin{corollary} \label{Sa is open/closed}
Let $U \subseteq G$ be open, $F \subseteq G$ be closed, $A \subseteq G$ and $a \in G$. Then
\begin{enumerate}[label = \rm{(\roman*)}]
\item $Ua$ and $aU$ are open;
\item $Fa$ and $aF$ are closed;
\item $U\!A$ and $AU$ are open.
\end{enumerate}
\end{corollary}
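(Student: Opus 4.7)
The plan is to use the fact that the translation maps $r_a$ and $l_a$ are homeomorphisms, as established in the theorem immediately preceding this corollary. Each part of the corollary will follow by expressing the set in question either as an image under one of these homeomorphisms or as a union of such images.

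For (i), I would observe that $Ua = r_a(U)$ and $aU = l_a(U)$. Since $r_a$ and $l_a$ are homeomorphisms, they send open sets to open sets, so $Ua$ and $aU$ are open. For (ii), the exact same argument works with $F$ in place of $U$, using that homeomorphisms send closed sets to closed sets: $Fa = r_a(F)$ and $aF = l_a(F)$ are closed.

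For (iii), the key observation is the set-theoretic identity
\[
UA = \bigcup_{a \in A} Ua, \qquad AU = \bigcup_{a \in A} aU.
\]
By part (i), each $Ua$ and each $aU$ is open, so $UA$ and $AU$ are unions of open sets, and hence open.

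I do not expect any serious obstacle here; the only subtlety is recognising that the corresponding statement for $AF$ or $FA$ with $F$ closed does \emph{not} follow by the analogous argument, since arbitrary unions of closed sets need not be closed. That is presumably why the corollary is carefully stated only for the open case in (iii).
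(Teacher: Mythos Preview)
Your proof is correct and matches the paper's approach essentially line for line: (i) and (ii) via $Ua = r_a(U)$, $aU = l_a(U)$ (and likewise for $F$) using that $r_a$, $l_a$ are homeomorphisms, and (iii) via $UA = \bigcup_{x \in A} Ux$, $AU = \bigcup_{x \in A} xU$ together with (i). Your closing remark about why the closed case does not extend to (iii) is a nice addition not present in the paper.
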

\begin{proof}
For claim $\mathrm{(i)}$, we note that $Ua = r_a(U)$ and $aU = l_a(U)$. Claim $\mathrm{(ii)}$ follows similarly. Claim $\mathrm{(iii)}$ follows from $\mathrm{(i)}$, since $U\!A = \bigcup_{x \in A} Ux$ and $AU = \bigcup_{x \in A} xU$.
\end{proof}

\begin{lemma} \label{Kolmogorov group is T_1}
A topological group $G$ is $T_0$ if and only if $G$ is $T_1$.
\end{lemma}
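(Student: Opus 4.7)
The reverse direction is immediate, since $T_1$ implies $T_0$ by the standard hierarchy of separation axioms. So the work lies in showing that a $T_0$ topological group is automatically $T_1$, and my plan is to reduce this to showing that the closure of $\{1\}$ collapses to $\{1\}$ and then transport this to every other point by left translation.

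First I would prove the key group-theoretic identity $\eta(1) = \overline{\{1\}}$, where $1$ is the identity element. The inclusion $\eta(1) \subseteq \overline{\{1\}}$ is already available from Corollary \ref{equivalence class subset of closure}. For the reverse inclusion, I take $x \in \overline{\{1\}}$ and aim to show, via Lemma \ref{indistinguishability equivalences}(v), that $x$ and $1$ lie in exactly the same open sets. One direction is automatic from $x \in \overline{\{1\}}$: every open neighbourhood of $x$ meets $\{1\}$ and hence contains $1$. For the other direction, let $U$ be any open neighbourhood of $1$. Then $U^{-1}$ is open (since inversion is a homeomorphism by Lemma \ref{g_3}) and still contains $1$, so $xU^{-1}$ is an open neighbourhood of $x$ by Corollary \ref{Sa is open/closed}(i); since $x \in \overline{\{1\}}$, this set must contain $1$, which forces $x \in U$.

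With this identity in hand, the rest is short. Assuming $G$ is $T_0$, every equivalence class is a singleton, so $\{1\} = \eta(1) = \overline{\{1\}}$, i.e.\ $\{1\}$ is closed. For an arbitrary $a \in G$ we then have $\{a\} = a\{1\}$, which is closed by Corollary \ref{Sa is open/closed}(ii). Applying Corollary \ref{singletons are closed in T_1} gives that $G$ is $T_1$, completing the nontrivial direction.

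The main obstacle is the identity $\overline{\{1\}} \subseteq \eta(1)$: a priori, being in the closure of $\{1\}$ is a weaker condition than being topologically indistinguishable from $1$, and the step only goes through because the group structure allows us to translate an arbitrary neighbourhood of $1$ back to a neighbourhood of $x$ in a controlled way. Everything else is bookkeeping once this lemma is in place.
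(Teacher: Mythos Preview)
Your proof is correct. The argument that $x \in \overline{\{1\}}$ implies $x \equiv 1$ is clean: every open neighbourhood of $x$ contains $1$ by definition of the closure, and for the converse you correctly translate and invert to turn an open neighbourhood $U$ of $1$ into the open neighbourhood $xU^{-1}$ of $x$, which must then contain $1$, forcing $x \in U$. (A minor citation issue: inversion being a homeomorphism is not the content of Lemma~\ref{g_3} but is stated in the text just before it; this does not affect the mathematics.)

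Your route differs from the paper's. The paper argues directly from $T_0$: given $x \neq 1$, the $T_0$ axiom supplies an open set $U$ separating them in one direction, and if the separation goes the wrong way (i.e.\ $1 \in U$, $x \notin U$) the paper replaces $U$ by $U^{-1}x$ to flip it. Taking the union over all $x \neq 1$ shows $G \setminus \{1\}$ is open. Your approach instead isolates the general identity $\eta(1) = \overline{\{1\}}$, valid in \emph{any} topological group, and only invokes $T_0$ at the end to collapse $\eta(1)$ to a singleton. This buys you something the paper obtains only later: combined with translation, your identity yields $\eta(x) = \overline{\{x\}}$ for all $x$, which by Theorem~\ref{symmetricity condition} is exactly the statement that every topological group is symmetric (the paper's Corollary~\ref{topological groups are symmetric}). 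So your argument is slightly more informative, while the paper's is a touch more direct for the lemma as stated.
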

\begin{proof}
(\cite{Raum}, pp.~6--7) Obviously $T_1$ implies $T_0$. Suppose that $G$ is $T_0$. We show first that for every $x \in G \setminus \{1\}$, there is an open set $U_x$ such that $x \in U_x$ and $1 \not \in U_x$. Let $x \in G \setminus \{1\}$. Since $G$ is $T_0$, there is some open set $U$ such that $x \in U$ and $1 \not \in U$ or such that $x \not \in U$ and $1 \in U$. In the first case, let $U_x = U$. In the latter case, let $U_x = U^{-1}x = r_x(g_2(U))$. This is an open set, since $g_2$ and $r_x$ are homeomorphisms. Since $1 \in U$, we have $1 \in U^{-1}$, and hence $x \in U^{-1}x$. Also, since $x \not \in U$, we have $x^{-1} \not \in U^{-1}$, and hence $1 \not \in U^{-1}x$.

Let
\[
W = \bigcup_{x \in G \setminus \{1\}} U_x.
\]
Since $x \in U_x$ for all $x \in G \setminus \{1\}$, we have $G \setminus \{1\} \subseteq W$. Also, $1 \not \in U_x$ for all $x \in G \setminus \{1\}$, so $1 \not \in W$. Hence $W = G \setminus \{1\}$. As a union of open sets, $W$ is open, which proves that $\{1\}$ is closed. Then $r_x(\{1\}) = \{x\}$ is closed for all $x \in G$. By corollary \ref{singletons are closed in T_1}, $G$ is $T_1$.
\end{proof}

\begin{corollary} \label{Kolmogorov quotient group is T_1}
The Kolmogorov quotient $G/{\equiv}$ is $T_1$.
\end{corollary}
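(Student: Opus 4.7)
The cleanest route is to deduce the corollary directly from Lemma \ref{Kolmogorov group is T_1}. Since $G/{\equiv}$ is $T_0$ by construction, it suffices to show that $G/{\equiv}$ inherits a topological group structure from $G$; the lemma then hands us $T_1$ at once.

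To set up that structure, define $\eta(x)\eta(y) := \eta(xy)$ and $\eta(x)^{-1} := \eta(x^{-1})$. Well-definedness follows from Lemma \ref{continuous functions preserve equivalence classes} applied to the continuous maps $g_1$ and $g_2$: if $x_1 \equiv x_2$ and $y_1 \equiv y_2$, then $x_1 y_1 \equiv x_2 y_2$ and $x_1^{-1} \equiv x_2^{-1}$. The group axioms transfer from $G$ term by term. Continuity of the induced operations comes from the commutative-diagram theorem (which yields continuous maps on Kolmogorov quotients from continuous maps on the original spaces), combined with the theorem identifying $(G \times G)/{\equiv}$ with $G/{\equiv} \times G/{\equiv}$ (so that the induced multiplication is genuinely a map out of $G/{\equiv} \times G/{\equiv}$).

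A self-contained alternative avoids the group structure on the quotient entirely. Show that $G$ is symmetric ($R_0$): given $x \not\equiv y$, pick by Lemma \ref{indistinguishability equivalences} an open $W$ with $x \in W$ and $y \notin W$, and consider $V = y W^{-1} x$, which is open by Corollary \ref{Sa is open/closed} together with the fact that inversion is a homeomorphism. A brief computation gives $y \in V$ (the witness $w = x \in W$ yields $y w^{-1} x = y$) and $x \notin V$ (the equation $y w^{-1} x = x$ forces $w = y$, which is excluded). By Theorem \ref{symmetricity condition}, $\eta(x) = \overline{\{x\}}$ is then closed in $G$ for every $x$; hence $\{\eta(x)\}$ is closed in $G/{\equiv}$ by Corollary \ref{eta is closed}, and Corollary \ref{singletons are closed in T_1} delivers $T_1$.

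The main obstacle is small on either path. For the first route, it is the routine bookkeeping of promoting the inherited algebraic operations to continuous maps on the quotient (everything is already prepared by the earlier sections, but one must invoke several results in sequence). For the second route, it is spotting the correct translate $V = y W^{-1} x$; the motivating idea is to combine left and right translations with inversion so as to swap the roles of $x$ and $y$, after which the verification is mechanical.
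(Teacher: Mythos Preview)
Your proposal is correct on both routes. The paper states the corollary with no proof at all, intending it as immediate from Lemma~\ref{Kolmogorov group is T_1}: since $G/{\equiv}$ is $T_0$ and is a topological group, the lemma gives $T_1$. Your Route~1 is exactly this, with the honest bookkeeping filled in---and that bookkeeping is not gratuitous, because at this point in the paper the fact that $G/{\equiv}$ is a topological group has not yet been established (it only appears later via $G/{\equiv} = G/\overline{\{1\}}$). So your first route makes rigorous what the paper leaves as a forward reference.

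Your Route~2 is a genuinely different and arguably cleaner path: it proves directly that $G$ is symmetric (which the paper records as the \emph{next} corollary, again without proof) and then reads off $T_1$ for the quotient via Theorem~\ref{symmetricity condition} and Corollary~\ref{eta is closed}. The translate $V = yW^{-1}x$ is a nice choice; your verification that $y \in V$ and $x \notin V$ is correct, and openness follows from Corollary~\ref{Sa is open/closed} together with $g_2$ being a homeomorphism. This route has the advantage of respecting the paper's logical order (no appeal to the quotient's group structure), at the cost of a small explicit computation. In effect, the paper derives Corollary~\ref{topological groups are symmetric} from Corollary~\ref{Kolmogorov quotient group is T_1}, while your second route reverses that dependency---both directions are valid, and yours avoids the circularity risk.
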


\begin{corollary} \label{topological groups are symmetric}
All topological groups are symmetric.
\end{corollary}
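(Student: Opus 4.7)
The plan is to deduce the symmetricity of $G$ by combining Corollary \ref{Kolmogorov quotient group is T_1} (which says $G/{\equiv}$ is $T_1$) with Theorem \ref{symmetricity condition}, which characterizes symmetric spaces by the condition $\eta(x) = \overline{\{x\}}$ for all $x$. The key observation is that, once the Kolmogorov quotient is $T_1$, the equivalence classes in $G$ must be closed, and this forces them to coincide with the closures of the corresponding singletons.

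First, I would invoke Corollary \ref{Kolmogorov quotient group is T_1} to get that $G/{\equiv}$ is $T_1$. By Corollary \ref{singletons are closed in T_1}, this means that for every $x \in G$ the singleton $\{\eta(x)\}$ is closed in $G/{\equiv}$. Since the quotient map $\eta \colon G \to G/{\equiv}$ is continuous, the preimage $\eta^{-1}(\{\eta(x)\})$ is closed in $G$; but this preimage is precisely the equivalence class $\eta(x)$ regarded as a subset of $G$.

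Next, I would combine this with Corollary \ref{equivalence class subset of closure}, which gives $\eta(x) \subseteq \overline{\{x\}}$. On the other hand, since $x \in \eta(x)$ and $\eta(x)$ is closed, we have $\overline{\{x\}} \subseteq \eta(x)$. Hence $\eta(x) = \overline{\{x\}}$ for every $x \in G$, and Theorem \ref{symmetricity condition} immediately yields that $G$ is symmetric.

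There is essentially no obstacle here: the corollary is a clean consequence of stitching together Corollary \ref{Kolmogorov quotient group is T_1}, Corollary \ref{singletons are closed in T_1}, Corollary \ref{equivalence class subset of closure}, and Theorem \ref{symmetricity condition}. The only thing to double-check is that $\eta^{-1}(\{\eta(x)\})$ really equals the equivalence class $\eta(x) \subseteq G$, which is immediate from the definition of $\eta$.
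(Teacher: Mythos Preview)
Your proof is correct and is exactly the argument the paper leaves implicit: the corollary is stated without proof immediately after Corollary~\ref{Kolmogorov quotient group is T_1}, and the intended derivation is precisely the chain you spell out---from $G/{\equiv}$ being $T_1$, conclude each equivalence class is closed in $G$, hence $\eta(x)=\overline{\{x\}}$ by Corollary~\ref{equivalence class subset of closure}, and then invoke Theorem~\ref{symmetricity condition}.
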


A subset $H \subseteq G$ is a \emph{subgroup} of $G$, denoted $H \leq G$, if $HH^{-1} = H$. A subgroup $H$ of $G$ is \emph{normal} or \emph{invariant}, denoted $H \unlhd G$, if $aHa^{-1} = H$ for all $a \in G$. We use the term ``invariant'' to avoid confusion with normal topological spaces. The trivial subgroup $\{1\}$ is always invariant.

\begin{theorem} 
If $H \leq G$, then $\overline{H} \leq G$. If $H \unlhd G$, then $\overline{H} \unlhd G$.
\end{theorem}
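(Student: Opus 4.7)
The plan is to reduce both claims to the well-known facts that continuous maps push closures into closures, that homeomorphisms preserve closures exactly, and that in the product topology $\overline{A \times B} = \overline{A} \times \overline{B}$. All the group operations are (at least) continuous by assumption, so the closure of an algebraically closed set should remain algebraically closed.

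For the first claim, I would use the map $g_3 \colon G \times G \to G$, $g_3(x,y) = xy^{-1}$, which is continuous by lemma \ref{g_3}. The subgroup condition $HH^{-1} = H$ says exactly $g_3(H \times H) = H$. Then I compute
\[
\overline{H}\,\overline{H}^{-1} = g_3(\overline{H} \times \overline{H}) = g_3\bigl(\overline{H \times H}\bigr) \subseteq \overline{g_3(H \times H)} = \overline{H},
\]
using $\overline{H} \times \overline{H} = \overline{H \times H}$ and continuity of $g_3$. This gives $\overline{H}\,\overline{H}^{-1} \subseteq \overline{H}$. The reverse inclusion $\overline{H} \subseteq \overline{H}\,\overline{H}^{-1}$ follows because $1 \in H \subseteq \overline{H}$ lets us write any $x \in \overline{H}$ as $x \cdot 1^{-1}$. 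Hence $\overline{H}\,\overline{H}^{-1} = \overline{H}$, i.e., $\overline{H} \leq G$.

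For the second claim, fix $a \in G$ and consider the conjugation $\phi_a \colon G \to G$, $\phi_a(x) = axa^{-1} = l_a(r_{a^{-1}}(x))$. Since $l_a$ and $r_{a^{-1}}$ are homeomorphisms, so is $\phi_a$. Homeomorphisms commute with closure, so
\[
a\overline{H}a^{-1} = \phi_a(\overline{H}) = \overline{\phi_a(H)} = \overline{aHa^{-1}} = \overline{H},
\]
where the last equality uses invariance of $H$. As this holds for every $a \in G$, we conclude $\overline{H} \unlhd G$.

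The only mild obstacle is the identity $\overline{H \times H} = \overline{H} \times \overline{H}$ used in the first step; this is a standard fact about product topologies that the paper does not prove, but it is a one-line consequence of the definition of the product topology (or of the fact that both projections are continuous and open). Everything else is immediate from earlier results in the excerpt, so no hidden difficulties are expected.
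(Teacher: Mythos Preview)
Your proof is correct and follows the same underlying principle as the paper's, namely that a continuous map $f$ satisfies $f(\overline{A}) \subseteq \overline{f(A)}$. The packaging differs slightly in two places. For the subgroup claim, the paper handles inversion and multiplication in two separate steps,
\[
\overline{H}\,\overline{H}^{-1} \subseteq \overline{H}\,\overline{H^{-1}} \subseteq \overline{HH^{-1}} = \overline{H},
\]
whereas you bundle both into the single continuous map $g_3$ at the cost of invoking $\overline{H}\times\overline{H}=\overline{H\times H}$ explicitly (the paper's second step uses the same fact implicitly, so nothing is really gained or lost). For the normality claim, the paper only records the inclusion $a\overline{H}a^{-1}\subseteq\overline{H}$ from continuity of $l_a\circ r_{a^{-1}}$; your observation that this map is a homeomorphism gives the equality $\phi_a(\overline{H})=\overline{\phi_a(H)}$ in one stroke, which is a small but clean improvement.
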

\begin{proof}
Recall that if $f \colon X \to Y$ is continuous, then $f(\overline{A}) \subseteq \overline{f(A)}$ for all $A \subseteq X$. If $H \leq G$, then by the continuity of inversion and multiplication,
\[
\overline{H}\,\overline{H}^{-1} \subseteq \overline{H}\,\overline{H^{-1}} \subseteq \overline{HH^{-1}} = \overline{H}.
\]
Hence $\overline{H} \leq G$.

For the latter part of the theorem, we note that $l_a \circ r_{a^{-1}}$ is continuous. If $H \unlhd G$, then
\[
a\overline{H}a^{-1} = (l_a \circ r_{a^{-1}})(\overline{H}) \subseteq \overline{(l_a \circ r_{a^{-1}})(H)} = \overline{aHa^{-1}} = \overline{H}.
\]
Hence $\overline{H} \unlhd G$.
\end{proof}

\begin{corollary}
$\overline{\{1\}} \unlhd G$.
\end{corollary}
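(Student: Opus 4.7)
The plan is to apply the preceding theorem directly. The trivial subgroup $\{1\}$ is mentioned explicitly in the paragraph before the theorem as being always invariant: indeed, for every $a \in G$ we have $a\{1\}a^{-1} = \{aa^{-1}\} = \{1\}$, so $\{1\} \unlhd G$. In particular $\{1\} \leq G$.

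The previous theorem asserts that invariance is preserved under closure: if $H \unlhd G$, then $\overline{H} \unlhd G$. Plugging in $H = \{1\}$ yields $\overline{\{1\}} \unlhd G$, which is exactly the corollary. So the ``proof'' is a single line invoking the theorem.

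I do not anticipate any obstacle; this is a one-step corollary whose sole purpose is presumably to single out $\overline{\{1\}}$ for later use (it is natural to expect, given corollary \ref{topological groups are symmetric} and theorem \ref{symmetricity condition}, that $\overline{\{1\}} = \eta(1)$ will play the role of the ``identity equivalence class'' in the group structure on $G/{\equiv}$, and being an invariant subgroup is precisely what guarantees that the coset space inherits a group structure).
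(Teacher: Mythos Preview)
Your proposal is correct and matches the paper's approach exactly: the paper states this corollary without proof, as it is an immediate consequence of applying the preceding theorem to the trivial invariant subgroup $\{1\}$.
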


Let $H \leq G$. Let $G/H = \{xH \mid x \in G\}$, and let $\phi \colon G \to G/H$, $\phi(x) = xH$ for all $x \in G$. We define a topology on $G/H$ by letting $A \subseteq G/H$ be open if and only if $\phi^{-1}(A)$ is open. We intend to show that this makes $G/H$ a topological group when $H$ is invariant. The reader should recall or verify that the following statements hold for all $H \leq G$ and for all $x, y \in G$:
\begin{enumerate}[label = (\alph*)]
\item The cosets $xH$ partition $G$.
\item $xH = yH$ iff $x \in yH$ iff $y^{-1}x \in H$.
\item If $H \unlhd G$, then $xH = Hx$.
\item If $H \unlhd G$, then the set $G/H$ is a group with the group operation defined by $(xH)(yH) = xyH$.
\end{enumerate}

\begin{lemma} \label{phi is homomorphism}
Let $H \unlhd G$. Then the map $\phi$ is a continuous open group homomorphism.
\end{lemma}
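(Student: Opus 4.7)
The plan is to verify the three properties separately, with the homomorphism and continuity parts being essentially immediate from the definitions, and the openness requiring a small calculation involving cosets.

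First I would check that $\phi$ is a group homomorphism, which follows directly from property (d) listed above the lemma: for all $x, y \in G$,
\[
\phi(xy) = xyH = (xH)(yH) = \phi(x)\phi(y).
\]
This uses only that $H$ is invariant, so that the product on $G/H$ is well-defined.

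Next I would handle continuity, which is trivial by the definition of the quotient topology: a set $A \subseteq G/H$ is open if and only if $\phi^{-1}(A)$ is open in $G$, so every open set in $G/H$ has an open preimage.

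The least automatic step is openness. I would show that for $U \subseteq G$ open, $\phi(U)$ is open in $G/H$. By the quotient topology definition, this amounts to proving that $\phi^{-1}(\phi(U))$ is open in $G$. Using property (b), one has
\[
\phi^{-1}(\phi(U)) = \{x \in G \mid xH = uH \text{ for some } u \in U\} = UH = \bigcup_{h \in H} Uh.
\]
By corollary \ref{Sa is open/closed} (i), each right translate $Uh$ is open, hence the union $UH$ is open. Therefore $\phi(U)$ is open in $G/H$, completing the proof. The main (minor) obstacle is simply identifying $\phi^{-1}(\phi(U))$ with $UH$, which is the standard saturation argument for group cosets.
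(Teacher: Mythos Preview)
Your proof is correct and matches the paper's approach essentially line for line: continuity from the quotient topology, the homomorphism identity $\phi(xy)=(xH)(yH)$, and openness via $\phi^{-1}(\phi(U))=UH$ together with corollary~\ref{Sa is open/closed}. The only cosmetic difference is that the paper invokes part~(iii) of that corollary directly for $UH$, whereas you unwind it as $\bigcup_{h\in H} Uh$ and cite part~(i); these are the same argument.
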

\begin{proof}
Continuity is clear from the definition of the topology. For openness, let $U \subseteq G$ be open. Then
\begin{align*}
\phi^{-1}(\phi(U)) 	& = \{x \in G \mid xH = uH \text{ for some } u \in U\} \\
					& = \{x \in G \mid x \in uH \text{ for some } u \in U\} \\
					& = U\!H,
\end{align*}
which is open by corollary \ref{Sa is open/closed}. Thus $\phi(U)$ is also open.

To show that $\phi$ is a homomorphism, let $x, y \in G$. Then
\[
\phi(xy) = xyH = (xH)(yH) = \phi(x)\phi(y).
\]
\end{proof}

\begin{lemma}
Let $H \unlhd G$. Then $G/H$ is a topological group.
\end{lemma}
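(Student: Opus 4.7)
The plan is to use Lemma \ref{phi is homomorphism} and Lemma \ref{g_3}. By Lemma \ref{g_3}, to show that $G/H$ is a topological group it suffices to verify that the single map $\tilde g_3 \colon G/H \times G/H \to G/H$ defined by $\tilde g_3(xH, yH) = xy^{-1}H$ is continuous. Note that $\tilde g_3$ is well-defined and satisfies $\tilde g_3(\phi(x), \phi(y)) = \phi(xy^{-1})$ because, by Lemma \ref{phi is homomorphism}, $\phi$ is a group homomorphism, so $\phi(x)\phi(y)^{-1} = \phi(xy^{-1})$.

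I would check continuity directly via the neighbourhood criterion analogous to (\ref{g_1}). Fix $x, y \in G$ and let $W \in \mathcal{N}(\phi(xy^{-1}))$. Continuity of $\phi$ makes $\phi^{-1}(W)$ a neighbourhood of $xy^{-1}$ in $G$. Applying the continuity of $g_3$ in $G$, there exist $U \in \mathcal{N}(x)$ and $V \in \mathcal{N}(y)$ with $UV^{-1} \subseteq \phi^{-1}(W)$. By Lemma \ref{phi is homomorphism} the map $\phi$ is open, so $\phi(U)$ and $\phi(V)$ are open neighbourhoods of $\phi(x)$ and $\phi(y)$ in $G/H$. Using that $\phi$ is a surjective homomorphism,
\[
\phi(U)\phi(V)^{-1} = \phi(UV^{-1}) \subseteq \phi(\phi^{-1}(W)) = W,
\]
which is exactly the neighbourhood condition required for continuity of $\tilde g_3$.

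There is no real obstacle in the argument: the entire proof amounts to transporting the continuity of $g_3$ from $G$ to $G/H$, which works precisely because $\phi$ is simultaneously continuous, open, surjective, and a group homomorphism. One could alternatively give a more conceptual argument by noting that $\phi \times \phi$ is a continuous open surjection, hence a quotient map, and then $\tilde g_3$ is continuous if and only if $\tilde g_3 \circ (\phi \times \phi) = \phi \circ g_3$ is continuous, which it is as a composition of continuous maps; I would still prefer the neighbourhood-based version since it matches the style used earlier in this section.
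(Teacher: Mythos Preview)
Your proof is correct and follows essentially the same route as the paper: reduce to continuity of $(xH,yH)\mapsto xy^{-1}H$ via Lemma~\ref{g_3}, pull an open neighbourhood $W$ of $xy^{-1}H$ back through $\phi$, apply continuity of $g_3$ in $G$ to get $U,V$ with $UV^{-1}\subseteq\phi^{-1}(W)$, and push forward using that $\phi$ is an open homomorphism to obtain $\phi(U)\phi(V)^{-1}\subseteq W$. The paper's argument is identical in substance, differing only in minor phrasing.
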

\begin{proof}
(\cite{Husain}, p.~59) It suffices to show that the operation $(xH,yH) \mapsto xy^{-1}H$ is continuous. Let $x$, $y \in G$, and let $W \subseteq G/H$ be an open neighbourhood of $xy^{-1}H$. Then $xy^{-1} \in \phi^{-1}(xy^{-1}H) \subseteq \phi^{-1}(W)$ and $\phi^{-1}(W)$ is open in $G$. Since $G$ is a topological group, there are open sets $U$ and $V$ such that $x \in U$, $y \in V$ and $xy^{-1} \in UV^{-1} \subseteq \phi^{-1}(W)$. By lemma \ref{phi is homomorphism},
\[
(xH)(yH)^{-1} \in \phi(U)[\phi(V)]^{-1} = \phi(U)\phi(V^{-1}) = \phi(UV^{-1}) \subseteq W.
\]
Since $U$ and $V$ are open, so are $\phi(U)$ and $\phi(V)$, which proves the claim.
\end{proof}

We can now prove the main result.

\begin{theorem}
$G/{\equiv} = G/\overline{\{1\}}$.
\end{theorem}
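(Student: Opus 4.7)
The plan is to identify the equivalence classes of $\equiv$ with the left cosets of $H := \overline{\{1\}}$, after which the equality of the two quotient spaces will follow because both carry the quotient topology induced by the same partition.

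First I would exploit the fact that every topological group is symmetric (corollary \ref{topological groups are symmetric}), so by theorem \ref{symmetricity condition} we have $\eta(x) = \overline{\{x\}}$ for every $x \in G$. The central computation is then to rewrite this closure as a coset: since the left translation $l_x$ is a homeomorphism, it commutes with closure, so
\[
\overline{\{x\}} = \overline{l_x(\{1\})} = l_x(\overline{\{1\}}) = xH.
\]
Therefore $\eta(x) = xH$ for all $x \in G$, which shows that the partition of $G$ into topological indistinguishability classes coincides exactly with the partition into left cosets of $H$ (and since $H \unlhd G$, left and right cosets agree). In particular, $G/{\equiv}$ and $G/H$ have the same underlying set and the same quotient map $G \to G/{\equiv} = G/H$.

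Finally I would observe that the two topologies agree tautologically: both are defined as the finest topology making the (same) quotient map continuous, i.e.\ a set is open in $G/{\equiv}$ iff its preimage in $G$ is open iff it is open in $G/H$. Hence $G/{\equiv} = G/\overline{\{1\}}$ as topological spaces, and since the group operation on $G/H$ is determined by the coset structure, the equality holds as topological groups.

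The only non-routine step is the identification $\overline{\{x\}} = xH$; everything else is a matter of assembling previously established facts (symmetry of topological groups, theorem \ref{symmetricity condition}, and the homeomorphism property of $l_x$). No genuine obstacle is expected.
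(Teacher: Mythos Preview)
Your argument is correct and, if anything, slightly more streamlined than the paper's. Both proofs rest on the same two ingredients --- symmetry of topological groups (corollary \ref{topological groups are symmetric}) and the characterization $\eta(x) = \overline{\{x\}}$ from theorem \ref{symmetricity condition} --- but they deploy them differently. The paper establishes the equivalence $x \equiv y \iff y^{-1}x \in \overline{\{1\}}$ in two separate implications, each time invoking lemma \ref{continuous functions preserve equivalence classes} to transport the relation $\equiv$ along the continuous translation $l_{y^{-1}}$ or $l_y$; it only applies theorem \ref{symmetricity condition} at the single point $1$. You instead apply theorem \ref{symmetricity condition} at \emph{every} point to get $\eta(x) = \overline{\{x\}}$, and then use the purely topological fact that the homeomorphism $l_x$ commutes with closure to rewrite $\overline{\{x\}} = x\overline{\{1\}}$ in one line. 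Your route bypasses lemma \ref{continuous functions preserve equivalence classes} altogether and identifies the equivalence classes with cosets directly rather than through a biconditional; the paper's route, on the other hand, makes the role of $y^{-1}x \in \overline{\{1\}}$ explicit, which it then exploits in the remarks following the proof.
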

\begin{proof}
What we want to show is that for all $x, y \in G$, $x \equiv y$ if and only if $x\overline{\{1\}} = y\overline{\{1\}}$. Let $x, y \in G$ and suppose first that $x \equiv y$. Since $\el_{y^{-1}}$ is continuous, we have $y^{-1}x \equiv 1$ by lemma \ref{continuous functions preserve equivalence classes}. Hence $y^{-1}x \in \eta(1) \subseteq \overline{\{1\}}$, and consequently $x\overline{\{1\}} = y\overline{\{1\}}$.

Suppose now that $x\overline{\{1\}} = y\overline{\{1\}}$. Equivalently, $y^{-1}x \in \overline{\{1\}}$. By corollary \ref{topological groups are symmetric} and theorem \ref{symmetricity condition}, we have $\eta(1) = \overline{\{1\}}$. Then $y^{-1}x \in \eta(1)$, that is, $y^{-1}x \equiv 1$. Now, by the continuity of $\el_y$ and lemma \ref{continuous functions preserve equivalence classes}, $x \equiv y$.
\end{proof}

The result tells us that $G/{\equiv}$ is a topological group with a naturally arising group operation, namely that of $G/\overline{\{1\}}$. Also, $\eta(x) = x\overline{\{1\}} = \overline{\{1\}}x$ for all $x \in G$.

From the proof above, we see that $x \equiv y$ if and only if $y^{-1}x \in \overline{\{1\}}$. Since $\overline{\{1\}}$ is invariant, we can replace the left cosets $x\overline{\{1\}}$ by right cosets $\overline{\{1\}}x$ and use $r_y$ and $r_{y^{-1}}$ in place of $l_y$ and $l_{y^{-1}}$ to show that this is also equivalent to $xy^{-1} \in \overline{\{1\}}$. Of course, we can swap the roles of $x$ and $y$, so $x^{-1}y \in \overline{\{1\}}$ and $yx^{-1} \in \overline{\{1\}}$ are also equivalent to $x \equiv y$.

We have now seen that the equivalence classes are determined by the closure of $\{1\}$. Next we will show that the whole topology is determined by the neighbourhoods of $1$. In the next section, we will use this to show that topological groups are completely regular, strengthening corollary \ref{topological groups are symmetric}.

\begin{lemma} \label{basis lemma}
Let $(X, \tau)$ be a topological space. Suppose that $\mathcal{B}$ is a collection of open sets of $X$ such that for all $U \in \tau$, for all $x \in U$, there is a set $B \in \mathcal{B}$ such that $x \in B \subseteq U$. Then $\mathcal{B}$ is a basis for $\tau$.
\end{lemma}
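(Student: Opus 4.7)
\medskip

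The plan is to verify directly that $\mathcal{B}$ satisfies the definition of a basis for $\tau$. Recall that a collection of subsets of $X$ is a basis for $\tau$ precisely when each of its members is open and every open set can be written as a union of its members. The first requirement is given by hypothesis, since $\mathcal{B}$ consists of open sets. So the essential step is to show that every $U \in \tau$ is a union of elements of $\mathcal{B}$.

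To do this, fix $U \in \tau$. For each $x \in U$, invoke the hypothesis to pick some $B_x \in \mathcal{B}$ with $x \in B_x \subseteq U$. I would then show the two inclusions $U = \bigcup_{x \in U} B_x$: the inclusion $\supseteq$ follows because each $B_x \subseteq U$, and the inclusion $\subseteq$ follows because $x \in B_x$ for every $x \in U$. This gives $U$ as a union of members of $\mathcal{B}$, which is exactly what is needed.

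Once these two items are in place, I would conclude that $\mathcal{B}$ is a basis for $\tau$, either by appeal to the standard characterisation above, or equivalently by checking the two basis axioms (every point lies in some $B \in \mathcal{B}$, and for $B_1, B_2 \in \mathcal{B}$ and $x \in B_1 \cap B_2$ there is $B_3 \in \mathcal{B}$ with $x \in B_3 \subseteq B_1 \cap B_2$) and then noting that the topology so generated must coincide with $\tau$: it is contained in $\tau$ because $\mathcal{B} \subseteq \tau$ and $\tau$ is closed under unions, and it contains $\tau$ by the union representation just established. There is no real obstacle here; the argument is essentially bookkeeping once the correct representation $U = \bigcup_{x \in U} B_x$ is written down. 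The only care needed is to state clearly which definition of ``basis'' is being verified, so that the hypothesis of the lemma matches the criterion in use.
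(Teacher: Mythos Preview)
Your argument is correct and is the standard direct verification. Note that the paper itself does not give a proof of this lemma at all: it simply cites Munkres, \emph{Topology}, p.~80. Your write-up is essentially the argument found there, so there is nothing to compare beyond observing that you have supplied the details the paper chose to omit.
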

\begin{proof}
\cite{Munkres}, p.~80.
\end{proof}

\begin{theorem} \label{xN are a basis}
The collection $\mathcal{B} = \{xN \mid N \in \mathcal{N}(1)\}$ is a basis for the topology of $G$.
\end{theorem}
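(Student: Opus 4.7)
The natural strategy is to appeal to Lemma \ref{basis lemma}: a collection of open sets is a basis provided that every point of every open set has a member of the collection sandwiched between it and that open set. So I need to verify two things: that each $xN \in \mathcal{B}$ is open, and that whenever $U$ is open and $x \in U$, some element of $\mathcal{B}$ lies between $\{x\}$ and $U$.

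For openness, I would invoke the preceding theorem that $l_x$ is a homeomorphism. Every $N \in \mathcal{N}(1)$ contains an open set $V$ with $1 \in V$, and $l_x(V) = xV$ is then open; taking $\mathcal{B}$ to consist of such left translates of open neighbourhoods (which loses nothing, since any larger $xN$ contains $xV$) ensures that $\mathcal{B}$ is a collection of open sets. Alternatively, one reads $\mathcal{N}(1)$ here as referring to open neighbourhoods of $1$, in which case $xN = l_x(N)$ is immediately open.

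For the basis criterion, let $U \subseteq G$ be open and $x \in U$. The key move is to set $N = x^{-1}U = l_{x^{-1}}(U)$. Since $l_{x^{-1}}$ is a homeomorphism, $N$ is open; since $x \in U$, we have $1 = x^{-1}x \in x^{-1}U = N$, so $N$ is an open neighbourhood of $1$. Then $xN = x(x^{-1}U) = U$, which contains $x$ and is trivially contained in $U$. Lemma \ref{basis lemma} then delivers the conclusion.

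There is no real obstacle here: the whole content is the observation that, since $l_x$ is a homeomorphism, every open neighbourhood of an arbitrary point $x$ is the $l_x$-image of an open neighbourhood of $1$, so the local geometry at $1$ determines the global topology. The only thing to be careful about is the open/non-open neighbourhood distinction, handled by passing to an open subset as above.
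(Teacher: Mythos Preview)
Your argument is correct and is essentially identical to the paper's: both set $N = x^{-1}U$, observe that it is an open neighbourhood of $1$, and note that $xN = U$, then invoke Lemma~\ref{basis lemma}. You are in fact slightly more careful than the paper in flagging the open-versus-general neighbourhood issue in the definition of $\mathcal{B}$, which the paper silently glosses over.
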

\begin{proof}
Let $U$ be open and $x \in U$. Then $1 \in x^{-1}U$, and $x^{-1}U$ is open, so $x^{-1}U \in \mathcal{N}(1)$. Now $x \in xx^{-1}U = U$, so $x^{-1}U$ satisfies the conditions of lemma \ref{basis lemma}. The claim follows.
\end{proof}

\section{Uniform spaces} \label{Uniform spaces}

Topological spaces originated as a general framework for studying the concept of continuity. Similarly, uniform spaces are a general framework for studying the concept of uniform continuity.

Let $X$ be a set. We denote by $\id_X$ the identity relation on $X$, that is, $\id_X = \{(x,x) \mid x \in X\}$. For all binary relations $U, V  \subseteq X \times X$ and for all $x_0 \in X$, we write $U^{-1} = \{(x,y) \mid (y,x) \in U\}$, $U \circ V = \{(x,z) \mid (x,y) \in U, (y,z) \in V\}$ and $U(x_0) = \{y \mid (x_0,y) \in U\}$.

\begin{definition}
A \emph{uniform structure} or \emph{uniformity} on a set $X$ is a filter $\mathcal{U}$ on $X \times X$ such that the following hold for all $U \in \mathcal U$:
\begin{enumerate}
\item[U1.] $\id_X \subseteq U$.
\item[U2.] $U^{-1} \in \mathcal{U}$.
\item[U3.] There exists $V \in \mathcal U$ such that $V \circ V \subseteq U$.
\end{enumerate}
The relations $U \in \mathcal{U}$ are called \emph{entourages}, and the pair $(X, \mathcal{U})$ is called a \emph{uniform space}.
\end{definition}

We note that if $\mathcal{U}$ is a uniformity, then by U1, $(x,y) \in U \in \mathcal{U}$ implies $(x,y) \in U \circ U$. In other words, $U \subseteq U \circ U$ for all $U \in \mathcal{U}$.

\begin{example}
Every set $X$ has the trivial uniformities $\{\id_X\}$ and $\powerset{X \times X}$.
\end{example}

\begin{example} \label{pseudometric uniformity}
Let $(X,d)$ be a pseudometric space. For all $r > 0$, let
\[
U_r = \{(x,y) \in X \times X \mid d(x,y) < r \}.
\]
Then let $\mathcal{U}_d = \{V \subseteq X \times X \mid U_r \subseteq V \text{ for some } r > 0\}$. It is easy to verify that $\mathcal{U}_d$ is a uniformity on $X$.
\end{example}

\begin{example} \label{topological group uniformity}
Let $G$ be a topological group. For all $N \in \mathcal{N}(1)$ let
\[
L_N = \{(x,y) \in G \times G \mid x^{-1}y \in N\}.
\]
Then let $\mathcal{S}(G) = \{L_N \mid N \in \mathcal{N}(1)\}$. It is easy to verify that $\mathcal{S}(G)$ is a filter. Since $x^{-1}x = 1 \in N$ for all $x \in G$ and for all $N \in \mathcal{N}(1)$, U1 holds for $\mathcal{S}(G)$. The axiom U2 holds because $L_N^{-1} = L_{N^{-1}}$. To see that U3 holds, substitute $x \mapsto 1$ and $y \mapsto 1$ in (\ref{g_1}) to obtain that for all neighbourhoods $W \in \mathcal{N}(1)$, there are neighbourhoods $U,V \in \mathcal{N}(1)$ such that $UV \subseteq W$. Without loss of generality, we may assume that $U$ and $V$ are open. Then $(U \cap V)(U \cap V) \subseteq UV \subseteq W$, and $U \cap V$ is an open neighbourhood of $1$. If $x^{-1}y, y^{-1}z \in U \cap V$, then $x^{-1}z = x^{-1}yy^{-1}z \in W$. Thus
\[
L_{U \cap V} \circ L_{U \cap V} = \{(x,z) \in G \times G \mid x^{-1}y, y^{-1}z \in U \cap V \text{ for some } y \in G\} \subseteq L_W.
\]
\end{example}

In the preceding examples we saw uniformities arise from topologies. Next we show that this process can be inverted to create a topology from a given uniformity.

\begin{theorem} \label{uniform topology}
Let $(X, \mathcal{U})$ be a uniform space. The collection
\[
\tau = \{T \in \powerset X \mid \text{for all } x \in T \text{ there exists } U \in \mathcal{U} \text{ such that } U(x) \subseteq T\}
\]
is a topology on $X$.
\end{theorem}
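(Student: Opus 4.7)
The plan is to verify the three axioms of a topology directly, using only the filter axioms F1--F3 applied to $\mathcal{U}$ (which is a filter on $X \times X$) together with U1. The definition of $\tau$ is entirely a ``local'' one — membership is witnessed pointwise — so each axiom unfolds straightforwardly once one notices how $U(x)$ interacts with the set-theoretic operations.

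First, $\emptyset \in \tau$ vacuously, and $X \in \tau$ because for any $x \in X$ we can take $U = X \times X \in \mathcal{U}$ (which belongs to $\mathcal{U}$ by F1), and clearly $U(x) \subseteq X$. Second, for unions, I would let $\{T_i\}_{i \in I} \subseteq \tau$ and $T = \bigcup_{i \in I} T_i$; any $x \in T$ lies in some $T_i$, which furnishes a $U \in \mathcal{U}$ with $U(x) \subseteq T_i \subseteq T$, so $T \in \tau$.

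The main step is closure under finite intersections. Given $T_1, T_2 \in \tau$ and $x \in T_1 \cap T_2$, there are $U_1, U_2 \in \mathcal{U}$ with $U_i(x) \subseteq T_i$. The key observation is the identity
\[
(U_1 \cap U_2)(x) = U_1(x) \cap U_2(x),
\]
which is immediate by unwinding: $y \in (U_1 \cap U_2)(x)$ iff $(x,y) \in U_1$ and $(x,y) \in U_2$. Since $\mathcal{U}$ is a filter, $U_1 \cap U_2 \in \mathcal{U}$ by F3, and then $(U_1 \cap U_2)(x) \subseteq T_1 \cap T_2$, giving $T_1 \cap T_2 \in \tau$. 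Induction handles arbitrary finite intersections.

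There is no real obstacle here; the only substantive content is the identity $(U_1 \cap U_2)(x) = U_1(x) \cap U_2(x)$, and the fact that the filter axioms on $\mathcal{U}$ provide exactly the closure properties needed. Axioms U2 and U3 are not used in this theorem — they will only become relevant when proving finer properties (such as regularity) of the resulting topology.
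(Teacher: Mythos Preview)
Your proof is correct and follows essentially the same approach as the paper: both verify the topology axioms directly from the filter axioms, with the key step for finite intersections being the identity $(U_1 \cap U_2)(x) = U_1(x) \cap U_2(x)$ together with F3. The only cosmetic difference is in the union case, where the paper takes $U = \bigcup_i U_i$ and invokes F2, whereas you more directly observe that the single entourage witnessing $U(x) \subseteq T_i$ already witnesses $U(x) \subseteq T$; your version is slightly cleaner but the argument is the same in substance.
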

\begin{proof}
Clearly $\emptyset \in \tau$. We use the fact that $\mathcal{U}$ is a filter to show that the other parts of the definition of topological spaces are satisfied.

To show that $X \in \tau$, we need to show that for all $x \in X$, there exists $U \in \mathcal{U}$ such that $U(x) = X$. By F1, we can take $U = X \times X$.

Let $\mathcal{I}$ be a set and $(T_i)_{i \in \mathcal I}$ a sequence of sets where $T_i \in \tau$ for all $i \in \mathcal{I}$. Then for all $i \in \mathcal{I}$, for all $x \in T_i$, there exists $U_i \in \mathcal{U}$ such that $U_i(x) \subseteq T_i$. Then for all $x \in \bigcup_{i \in \mathcal{I}} T_i$, $U(x) \subseteq \bigcup_{i \in \mathcal{I}} T_i$, where $U = \bigcup_{i \in \mathcal{I}} U_i$. By F2, $U \in \mathcal{U}$; hence $\bigcup_{i \in \mathcal{I}} T_i \in \tau$.

Let $T_1, T_2 \in \tau$. Then for all $x \in T_1 \cap T_2$, there exist $U_1, U_2 \in \mathcal{U}$ such that $U_1(x) \subseteq T_1$ and $U_2(x) \subseteq T_2$. Then for all $x \in T_1 \cap T_2$, $(U_1 \cap U_2)(x) = U_1(x) \cap U_2(x) \subseteq T_1 \cap T_2$. By F3, $U_1 \cap U_2 \in \mathcal{U}$; hence $T_1 \cap T_2 \in \tau$. By induction, the intersection of a finite collection of sets from $\tau$ is a member of $\tau$.
\end{proof}

We call the topology of theorem \ref{uniform topology} the \emph{uniform topology induced by $\mathcal{U}$}. A topological space $(X,\tau)$ is \emph{uniformizable} if there exists a uniformity on $X$ such that the uniform topology it induces is $\tau$. Such a uniformity need not be unique.

\begin{example}
Let $\mathcal{U}_d$ be as in example \ref{pseudometric uniformity}. By lemma \ref{basis lemma}, we can choose $\mathcal{B} = \{U_r(x) \mid r > 0\}$ as a basis for the uniform topology. We note that $U_r(x) = B(x,r)$, so this is exactly the topology determined by $d$.
\end{example}

\begin{example}
Let $\mathcal{S}(G)$ be as in example \ref{topological group uniformity}. By lemma \ref{basis lemma}, we can choose $\mathcal{B} = \{L_N(x) \mid N \in \mathcal{N}(1)\}$ as a basis for the uniform topology. We note that $L_N(x) = xN$, so this is exactly the original topology of $G$ by theorem \ref{xN are a basis}.
\end{example}

We say that a topological space $X$ is \emph{completely regular} or \emph{$R_{2.5}$} if for all closed sets $A \subseteq X$, for all points $x \in X \setminus A$, there is a continuous map $f \colon X \to \R$ such that $f(A) = \{0\}$ and $f(x) = 1$. It turns out that $(X,\tau)$ is uniformizable if and only if it is completely regular (\cite{HAF}, p.442--443). Thus we have shown that pseudometric spaces and topological groups are completely regular. Their Kolmogorov quotients are then Tychonoff (see table \ref{separation and regularity}).

The following theorem is a special case of a theorem on p.~178 of \cite{Kelley}.

\begin{theorem} \label{U(x) is open}
The sets $U(x)$ are open in the uniform topology for all $U \in \mathcal{U}$ and for all $x \in X$.
\end{theorem}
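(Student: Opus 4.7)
By unwinding the definition of the uniform topology given just above the statement, the set $U(x)$ is open if and only if for every $y \in U(x)$ there exists some $V \in \mathcal{U}$ with $V(y) \subseteq U(x)$. The plan is therefore to fix an arbitrary $y \in U(x)$, note that $(x, y) \in U$, and produce such an entourage $V$ directly from axiom U3.

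The natural move is to apply U3 to $U$ to obtain $V \in \mathcal{U}$ with $V \circ V \subseteq U$, and then verify $V(y) \subseteq U(x)$. Concretely, for $z \in V(y)$ one has $(y, z) \in V$, and combining this with $(x, y) \in U$ the aim is to conclude $(x, z) \in U$ via the composition relation $V \circ V \subseteq U$, which would give $z \in U(x)$ as required. Before committing, I would also replace $V$ by its symmetrization $V \cap V^{-1}$ via axiom U2 and observe that $V \subseteq V \circ V \subseteq U$ (using $\id_X \subseteq V$), so $V$ is a symmetric entourage sitting inside $U$.

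The main obstacle is the chaining itself: strictly speaking, $V \circ V \subseteq U$ demands that both legs of the chain $x \to y \to z$ lie in $V$, whereas we start only with $(x, y) \in U$. I expect this to be handled by iterating U3 once more, choosing a symmetric $W \in \mathcal{U}$ with $W \circ W \circ W \subseteq U$, so that the given relation $(x,y) \in U$ can be absorbed into a longer chain of $W$-related pairs. Once the technical verification $W(y) \subseteq U(x)$ is carried out, the arbitrariness of $y$ delivers openness of $U(x)$.
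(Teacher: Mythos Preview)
Your plan is the paper's: for each $y \in U(x)$ exhibit some $V \in \mathcal{U}$ with $V(y) \subseteq U(x)$, obtaining $V$ via axiom U3. You have correctly identified the obstacle that the paper's proof glosses over. The paper writes $V(y) \subseteq (V \circ V)(y) \subseteq U(x)$, but $V \circ V \subseteq U$ only gives $(V \circ V)(y) \subseteq U(y)$; the hypothesis $(x,y) \in U$ cannot be fed into a $V$-chain beginning at $x$.

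Your proposed repair---iterate U3 to obtain a symmetric $W$ with $W \circ W \circ W \subseteq U$ and ``absorb'' $(x,y) \in U$ into a $W$-chain---does not work, and in fact the theorem is false as stated. Shrinking to $W$ goes in the wrong direction: knowing $(x,y) \in U$ never places $(x,y)$ in $W$ or in any composite of $W$, since an entourage may contain isolated pairs far from the diagonal. Concretely, in $\R$ with the metric uniformity the set $U = \{(a,b) : |a - b| < 1\} \cup \{(0,2)\}$ is an entourage and $U(0) = (-1,1) \cup \{2\}$ is not open. What \emph{is} true, and is all the next theorem needs, is that $U(x)$ is a \emph{neighbourhood} of $x$: the paper's auxiliary set $U'(x) = \{\,y : W(y) \subseteq U(x) \text{ for some } W \in \mathcal{U}\,\}$ is open (one more application of U3), is contained in $U(x)$ by U1, and contains $x$---but in general it is a proper subset of $U(x)$.
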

\begin{proof}
Let $U \in \mathcal{U}$ and define $U'(x) = \{y \in X \mid V(y) \subseteq U(x) \text{ for some } V \in \mathcal{U}\}$. We show that $U'(x) = U(x)$; then it is clear that $U(x)$ is open.

To show that $U'(x) \subseteq U(x)$, let $y \in U'(x)$. Then there exists $V \in \mathcal{U}$ such that $V(y) \subseteq U(x)$. By U1, $y \in V(y)$, and hence $y \in U(x)$.

To show that $U(x) \subseteq U'(x)$, let $y \in U(x)$. By U3, there exists $V \in \mathcal{U}$ such that $V \circ V \subseteq U$. Thus $V(y) \subseteq (V \circ V)(y) \subseteq U(x)$. Then $y \in U'(x)$, which proves the claim.
\end{proof}

\begin{theorem}
Let $(X, \mathcal{U})$ be a uniform space equipped with the uniform topology. Then $x \equiv y$ if and only if $(x,y) \in U$ for all $U \in \mathcal{U}$.
\end{theorem}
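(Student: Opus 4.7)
The plan is to reduce the statement to the open-set characterization of topological indistinguishability given in Lemma \ref{indistinguishability equivalences} (v), and to exploit Theorem \ref{U(x) is open}, which already tells us that the sets $U(x)$ are open in the uniform topology.

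For the forward direction, suppose $x \equiv y$ and let $U \in \mathcal{U}$ be arbitrary. By Theorem \ref{U(x) is open}, $U(x)$ is open, and by U1 we have $x \in U(x)$. Since $x$ and $y$ lie in the same open sets, $y \in U(x)$ as well, which is exactly the statement $(x,y) \in U$. This direction is essentially immediate.

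For the reverse direction, assume $(x,y) \in U$ for every $U \in \mathcal{U}$. I want to verify criterion (v) of Lemma \ref{indistinguishability equivalences}: namely, that every open set containing $x$ also contains $y$, and vice versa. Let $T$ be open with $x \in T$; by the definition of the uniform topology there exists $V \in \mathcal{U}$ with $V(x) \subseteq T$. Since $(x,y) \in V$, we get $y \in V(x) \subseteq T$. For the symmetric implication, note that by U2, $(x,y) \in U$ for all $U$ entails $(y,x) \in U$ for all $U$ (apply the hypothesis to $U^{-1}$ and unwind the definition), so the same argument with the roles of $x$ and $y$ swapped shows that every open neighbourhood of $y$ contains $x$. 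Hence $x \equiv y$.

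There is no real obstacle here; the only thing one has to be careful about is the symmetry in the reverse direction, which is handled cleanly by invoking U2. The statement is essentially a direct unpacking of definitions once Theorem \ref{U(x) is open} is available.
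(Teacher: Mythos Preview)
Your proof is correct and uses essentially the same ingredients as the paper: Theorem \ref{U(x) is open}, axiom U1, and the definition of the uniform topology. The only cosmetic difference is that the paper argues both directions by contraposition and handles symmetry via a ``without loss of generality'' (justified by the symmetry of $\equiv$), whereas you argue directly and invoke U2 explicitly for the symmetric half of the reverse implication.
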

\begin{proof}
Suppose $x \not \equiv y$. Without loss of generality, we may assume that there is an open set $T$ such that $x \in T$ and $y \not \in T$. Since $T$ is open and $x \in T$, there exists $U \in \mathcal{U}$ such that $U(x) \subseteq T$. Since $y \not \in T$, we have $y \not \in U(x)$, or equivalently, $(x,y) \not \in U$.

Suppose there is an entourage $U \in \mathcal{U}$ such that $(x,y) \not \in U$. Then $y \not \in U(x)$. By theorem \ref{U(x) is open}, $U(x)$ is open, and by U1, $x \in U(x)$. Then $x \not \equiv y$.
\end{proof}

By U2, the pair $(x,y)$ is in every entourage if and only if the pair $(y,x)$ is in every entourage.

\begin{corollary}
On a uniform space $(X, \mathcal{U})$ equipped with the uniform topology, the topological indistinguishability relation is the intersection of all entourages:
\[
{\equiv} = \bigcap_{U \in \mathcal{U}}U.
\]
In particular, for all $x \in X$,
\[
\eta(x) = \bigcap_{U \in \mathcal{U}} U(x).
\]
\end{corollary}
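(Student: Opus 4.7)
The plan is to derive both statements directly from the preceding theorem, which characterizes $x \equiv y$ as $(x,y) \in U$ for every entourage $U \in \mathcal{U}$, together with the remark after it that guarantees the relation ${\equiv}$ is symmetric in its presentation as an intersection of entourages.

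For the first equality, I would simply unfold the definitions. A pair $(x,y)$ lies in $\bigcap_{U \in \mathcal{U}} U$ if and only if $(x,y) \in U$ for every $U \in \mathcal{U}$, which by the preceding theorem is equivalent to $x \equiv y$. Reading ${\equiv}$ as a subset of $X \times X$, this yields ${\equiv} = \bigcap_{U \in \mathcal{U}} U$. No subtlety arises here because the theorem is already stated as an ``if and only if''.

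For the ``in particular'' statement about $\eta(x)$, I would fix $x \in X$ and compute:
\[
\eta(x) = \{y \in X \mid y \equiv x\} = \{y \in X \mid (x,y) \in U \text{ for all } U \in \mathcal{U}\} = \bigcap_{U \in \mathcal{U}} \{y \in X \mid (x,y) \in U\} = \bigcap_{U \in \mathcal{U}} U(x),
\]
using the definition of $U(x)$ from the preliminaries of the section. The only mild subtlety is the direction of the pair: the theorem gives $(x,y) \in U$, and one may worry whether this is equivalent to $(y,x) \in U$, but the remark following the theorem (invoking U2) removes any ambiguity.

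Since the whole corollary is essentially a restatement of the preceding theorem together with a notational rearrangement, I expect no genuine obstacle; the task is just to be careful with the set-builder gymnastics. A proof of two or three lines should suffice.
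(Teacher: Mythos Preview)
Your proposal is correct and matches the paper's approach exactly: the paper states this corollary with no proof, since it is immediate from the preceding theorem and the remark invoking U2, precisely as you describe.
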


\section{A homotopy equivalence of Alexandrov- \\ discrete spaces}

In spaces where all intersections of open sets are open, the Kolmogorov quotient map is a homotopy equivalence, as defined below. We present the proof of this result from \cite{McCord}. The proof uses the axiom of choice.

A topological space is \emph{Alexandrov-discrete} if all intersections of open sets are open. All finite spaces are Alexandrov-discrete, as is the space of natural numbers with a basis consisting of the sets $V_n = \{m \in \N \mid m \geq n\}$.

We denote
\[
U_x = \smashoperator[r]{\bigcap_{\substack{U \text{ open} \\ x \in U}}} \enspace U = \smashoperator[r]{\bigcap_{U \in \mathcal N(x)}} \enspace U,
\]
and call the set $U_x$ the \emph{hull of $x$}. By the same proof as lemma \ref{indistinguishability equivalences} (vii), except that we replace closed sets with open sets, we see that $x \equiv y$ if and only if $U_x = U_y$. In an Alexandrov-discrete space, each $U_x$ is open, and the collection $\{U_x \mid x \in X\}$ is a basis for the topology.

We also define a relation $\leq$ on an Alexandrov-discrete space by letting $x \leq y$ if and only if $x \in U_y$. Note that this is also equivalent to $U_x \subseteq U_y$. This relation is a \emph{preorder}, that is, it is reflexive and transitive. It is a partial order if and only if the space is $T_0$. We say a map $f \colon X \to Y$ between Alexandrov-discrete spaces is \emph{order-preserving} if $x_1 \leq x_2$ implies $f(x_1) \leq f(x_2)$ for all $x_1, x_2 \in X$.

\begin{lemma}
Let $X$ and $Y$ be Alexandrov-discrete spaces and $f \colon X \to Y$ a map. Then $f$ is order-preserving if and only if $f$ is continuous.
\end{lemma}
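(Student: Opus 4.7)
The plan is to exploit the fact that in an Alexandrov-discrete space the hulls $\{U_x \mid x \in X\}$ form a basis, and that each $U_x$ is in fact the \emph{smallest} open set containing $x$. This latter property means that a set $V \subseteq X$ is open if and only if $U_z \subseteq V$ for every $z \in V$, and moreover that $U_z \subseteq W$ whenever $W$ is an open set with $z \in W$. Both directions reduce to pushing this observation through $f$.

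For the direction ($\Rightarrow$), I would assume $f$ is order-preserving and show that the preimage of each basic open set $U_y$ of $Y$ is open. Given $x \in f^{-1}(U_y)$, I need $U_x \subseteq f^{-1}(U_y)$. So let $z \in U_x$, equivalently $z \leq x$. Order-preservation gives $f(z) \leq f(x)$; combined with $f(x) \in U_y$, i.e.\ $f(x) \leq y$, transitivity of $\leq$ yields $f(z) \leq y$, hence $f(z) \in U_y$ and $z \in f^{-1}(U_y)$. Continuity follows from the basis characterization.

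For the direction ($\Leftarrow$), I would assume $f$ is continuous and pick $x_1, x_2 \in X$ with $x_1 \leq x_2$, meaning $x_1 \in U_{x_2}$. The set $U_{f(x_2)}$ is open and contains $f(x_2)$, so by continuity $f^{-1}(U_{f(x_2)})$ is an open neighbourhood of $x_2$. Since $U_{x_2}$ is contained in every open neighbourhood of $x_2$, we get $U_{x_2} \subseteq f^{-1}(U_{f(x_2)})$, so $x_1 \in f^{-1}(U_{f(x_2)})$, that is, $f(x_1) \in U_{f(x_2)}$, i.e.\ $f(x_1) \leq f(x_2)$.

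There is no real obstacle here; the only subtlety worth flagging is that the argument relies crucially on the Alexandrov-discrete hypothesis, which ensures $U_x$ is itself open (and hence the smallest open neighbourhood of $x$). Without Alexandrov-discreteness the forward direction would fail, because $U_x$ need not be open, so verifying openness of $f^{-1}(U_y)$ via hulls would no longer be available.
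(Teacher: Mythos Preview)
Your proof is correct and follows essentially the same approach as the paper: both directions hinge on the fact that $U_x$ is the smallest open neighbourhood of $x$. The only differences are cosmetic --- the paper phrases the forward direction via continuity at a point (showing $f(U_x)\subseteq U_{f(x)}\subseteq V$ for every neighbourhood $V$ of $f(x)$) rather than via preimages of basic open sets, and handles the backward direction by contraposition rather than directly --- but the underlying argument is the same.
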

\begin{proof}
Suppose $f$ is order-preserving. Let $x \in X$. For all $x' \in U_x$, we have $f(x') \in U_{f(x)}$, since $f$ preserves the order. Hence $f(U_x) \subseteq U_{f(x)}$. For all neighbourhoods $V$ of $f(x)$, the set $U_x$ is a neighbourhood of $x$ such that $f(U_x) \subseteq U_{f(x)} \subseteq V$; that is, $f$ is continuous at point $x$. Since $x$ was arbitrary, $f$ is continuous.

Suppose then that $f$ is not order-preserving. Then there are points $x, x' \in X$ such that $x' \in U_x$ but $f(x') \not \in U_{f(x)}$. Hence $f(U_x) \not \subseteq U_{f(x)}$. Since $U_x$ is the smallest neighbourhood of $x$, for all neighbourhoods $U$ of $x$ we have $f(U) \not \subseteq U_{f(x)}$. Hence $f$ is not continuous at point $x$, and therefore not continuous.
\end{proof}

\begin{lemma}
If $(X, \tau)$ is Alexandrov-discrete, then $\eta^\rightarrow(U_x) = U_{\eta(x)}$.
\end{lemma}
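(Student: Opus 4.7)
The plan is to prove the equality by showing the two inclusions $U_{\eta(x)} \subseteq \eta^\rightarrow(U_x)$ and $\eta^\rightarrow(U_x) \subseteq U_{\eta(x)}$, using the Alexandrov-discreteness of $X$ only to ensure that $U_x$ is itself an open set, and using corollary \ref{eta is open} to promote open sets across the quotient map.

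For the first inclusion, I would observe that since $X$ is Alexandrov-discrete, the hull $U_x$ is an open neighbourhood of $x$. By corollary \ref{eta is open}, $\eta^\rightarrow(U_x)$ is then an open set in $X/{\equiv}$, and it clearly contains $\eta(x)$. Since $U_{\eta(x)}$ is defined as the intersection of \emph{all} open neighbourhoods of $\eta(x)$, the containment $U_{\eta(x)} \subseteq \eta^\rightarrow(U_x)$ is immediate.

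For the reverse inclusion, I would not rely on $X/{\equiv}$ being Alexandrov-discrete (although it is, by pulling intersections back through $\eta$). Instead, I would let $V$ be an arbitrary open neighbourhood of $\eta(x)$ in $X/{\equiv}$. Then $\eta^{-1}(V)$ is open in $X$ by the definition of the quotient topology, and it contains $x$, so by the very definition of $U_x$ as the smallest such intersection we have $U_x \subseteq \eta^{-1}(V)$. Applying $\eta^\rightarrow$ and using surjectivity of $\eta$ yields $\eta^\rightarrow(U_x) \subseteq \eta^\rightarrow(\eta^{-1}(V)) = V$. Intersecting over all such $V$ gives $\eta^\rightarrow(U_x) \subseteq U_{\eta(x)}$, which completes the proof.

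There is no real obstacle here; the argument is a direct unfolding of definitions, and the only substantive ingredients are the openness of $\eta$ (corollary \ref{eta is open}), the continuity of $\eta$, and the hypothesis that arbitrary intersections of open sets in $X$ are open, which is exactly what guarantees that the minimal neighbourhood $U_x$ is itself open.
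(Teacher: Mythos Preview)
Your proof is correct and follows essentially the same approach as the paper: both use that $U_x$ is open together with corollary~\ref{eta is open} to get $U_{\eta(x)} \subseteq \eta^\rightarrow(U_x)$, and both use continuity of $\eta$ to get the reverse inclusion. The only cosmetic difference is that the paper first rewrites $U_{\eta(x)}$ as $\bigcap_{U \ni x} \eta^\rightarrow(U)$ and then invokes the general containment $\eta^\rightarrow\bigl(\bigcap U\bigr) \subseteq \bigcap \eta^\rightarrow(U)$, whereas you argue directly that $\eta^\rightarrow(U_x) \subseteq V$ for each open $V \ni \eta(x)$; the underlying ideas are identical.
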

\begin{proof}
Denote the topology of $X/{\equiv}$ by $\tau_{\equiv}$. Under the quotient map $\eta$, every open neighbourhood $V$ of $\eta(x)$ has an open preimage that is a neighbourhood of $x$. Conversely, every open neighbourhood $U$ of $x$ maps to an open neighbourhood of $\eta(x)$ by corollary \ref{eta is open}. Consequently,
\[
\bigcap_{\substack{U \in \tau \\ x \in U}} \eta^\rightarrow(U) = \smashoperator[r]{\bigcap_{\substack{V \in \tau_{\equiv} \\ \eta(x) \in V}}} \enspace V = U_{\eta(x)}.
\]
Now we have
\[
\eta^\rightarrow(U_x) = \eta^\rightarrow\left(\bigcap_{\substack{U \in \tau \\ x \in U}} U\right) \subseteq \bigcap_{\substack{U \in \tau \\ x \in U}} \eta^\rightarrow(U) = U_{\eta(x)}.
\]
Since $\eta^\rightarrow(U_x)$ is an open neighbourhood of $\eta(x)$, we have $U_{\eta(x)} \subseteq \eta^\rightarrow(U_x)$. Therefore, $\eta^\rightarrow(U_x) = U_{\eta(x)}$.
\end{proof}

\begin{corollary} \label{eta preserves order}
If $X$ is Alexandrov-discrete, then for all $x,y \in X$, $\eta(x) \leq \eta(y)$ if and only if $x \leq y$.
\end{corollary}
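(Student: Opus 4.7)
The plan is to leverage the preceding lemma, which identifies $\eta^\rightarrow(U_y)$ with $U_{\eta(y)}$, together with the characterization that $a \leq b$ iff $a \in U_b$. Both directions then reduce to membership statements that transfer cleanly across the quotient map.

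For the forward direction, I would start from $x \leq y$, which by definition means $x \in U_y$. Applying $\eta$ gives $\eta(x) \in \eta^\rightarrow(U_y) = U_{\eta(y)}$ by the previous lemma, and therefore $\eta(x) \leq \eta(y)$.

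For the converse, assume $\eta(x) \leq \eta(y)$, so $\eta(x) \in U_{\eta(y)} = \eta^\rightarrow(U_y)$. This means $\eta(x) = \eta(x')$ for some $x' \in U_y$, i.e. $x \equiv x'$. Since $X$ is Alexandrov-discrete, $U_y$ is open; by Lemma \ref{indistinguishability equivalences} (v), topologically indistinguishable points belong to the same open sets, so $x \in U_y$ as well, giving $x \leq y$.

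The only subtlety — and the one potential obstacle — is in the backward direction: $\eta$ is not injective, so $\eta(x) \in \eta^\rightarrow(U_y)$ does not \emph{a priori} mean $x \in U_y$. The resolution is precisely that $U_y$ is open in the Alexandrov-discrete space, which allows topological indistinguishability to promote the witness $x' \in U_y$ to membership of $x$ itself. No further machinery is required.
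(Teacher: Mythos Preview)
Your proof is correct and follows essentially the same approach as the paper: both use the preceding lemma for the forward direction, and for the converse both exploit that $U_y$ is open so that $\eta^{-1}(\eta(U_y)) = U_y$. The only cosmetic difference is that the paper phrases the converse via the identity $\eta^\leftarrow(\eta^\rightarrow(U_y)) = U_y$ for Borel sets, whereas you unpack this directly by picking a witness $x' \equiv x$ in $U_y$ and invoking Lemma~\ref{indistinguishability equivalences}(v).
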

\begin{proof}
By the previous lemma, $x \in U_y$ implies $\eta(x) \in \eta^\rightarrow(U_y) = U_{\eta(y)}$. Conversely, $\eta(x) = U_{\eta(y)}$ implies $x \in \eta^\leftarrow(\eta^\rightarrow(U_y)) = U_y$, since $U_y$ is a Borel set.
\end{proof}

\begin{definition}
Let $X$ and $Y$ be topological spaces with $f \colon X \to Y$ and $g \colon X \to Y$ continuous functions. Let $I$ be the unit interval $[0,1]$ taken as a subspace of $\R$. The function $f$ is \emph{homotopic to} $g$ if there exists a continuous map $F \colon X \times I \to Y$ such that $F(x,0) = f(x)$ and $F(x,1) = g(x)$ for all $x \in X$.
\end{definition}

Intuitively, the second argument of $F$ can be interpreted as time; then $F$ describes the function $f$ turning into the function $g$ in a continuous manner over time. Clearly every continuous function is homotopic to itself; just let $F(x,t) = f(x)$ for all $x \in X$, $t \in I$.


\begin{definition}
Topological spaces $X$ and $Y$ are \emph{homotopy equivalent} if there exist continuous maps $f \colon X \to Y$ and $g \colon Y \to X$ such that $g \circ f$ is homotopic to $\id_X$, and $f \circ g$ is homotopic to $\id_Y$. In this case, we say that $f$ and $g$ are \emph{homotopy equivalences}.
\end{definition}

\begin{theorem}
If $X$ is an Alexandrov-discrete space, then the quotient map $\eta$ is a homotopy equivalence.
\end{theorem}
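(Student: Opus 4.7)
The plan is to build a continuous homotopy inverse for $\eta$ using a section. By the axiom of choice, fix $\mu \colon X/{\equiv} \to X$ with $\eta \circ \mu = \id_{X/{\equiv}}$, so that $\mu$ picks a representative from each equivalence class. This immediately gives one half of the homotopy equivalence: $\eta \circ \mu$ equals the identity, hence is trivially homotopic to it.

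The next step is to verify that $\mu$ is continuous. I would appeal to the order-preservation criterion for Alexandrov-discrete spaces, after first observing that $X/{\equiv}$ is itself Alexandrov-discrete because $\eta^\leftarrow$ commutes with arbitrary intersections. If $\eta(x) \leq \eta(y)$ in $X/{\equiv}$, then $x \leq y$ by corollary \ref{eta preserves order}, so $U_x \subseteq U_y$. Since $\mu(\eta(x)) \equiv x$ and $\mu(\eta(y)) \equiv y$ have the same hulls as $x$ and $y$ respectively, we obtain $\mu(\eta(x)) \leq \mu(\eta(y))$. So $\mu$ preserves the preorder and is therefore continuous.

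The central step is producing a homotopy $F$ between $\mu \circ \eta$ and $\id_X$. The key fact is that $\mu(\eta(x)) \equiv x$, so the two points lie in exactly the same open sets. I would define $F \colon X \times I \to X$ by
\[
F(x, t) = \begin{cases} \mu(\eta(x)) & \text{if } t < 1, \\ x & \text{if } t = 1, \end{cases}
\]
so that $F(\cdot, 0) = \mu \circ \eta$ and $F(\cdot, 1) = \id_X$. For any open $V \subseteq X$, lemma \ref{indistinguishability equivalences} $\mathrm{(v)}$ gives $\mu(\eta(x)) \in V$ iff $x \in V$, and consequently
\[
F^{-1}(V) = \bigl(V \times [0,1)\bigr) \cup \bigl(V \times \{1\}\bigr) = V \times I,
\]
which is open in $X \times I$. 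Hence $F$ is continuous and provides the required homotopy $\mu \circ \eta \simeq \id_X$, so $\eta$ is a homotopy equivalence with inverse $\mu$.

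The main conceptual subtlety is the asymmetric nature of $F$: it agrees with $\mu \circ \eta$ on $[0,1)$ and abruptly switches to $\id_X$ at $t = 1$. Joint continuity hinges entirely on the fact that $\mu(\eta(x))$ and $x$ are topologically indistinguishable, making the jump invisible to any open set of $X$. This is the "one-sided" homotopy construction characteristic of McCord's theory, usually formulated as the general lemma that any two continuous maps $f, g \colon Y \to X$ with $f(y) \leq g(y)$ pointwise are homotopic in this explicit way.
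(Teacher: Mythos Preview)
Your proof is correct and follows the same overall strategy as the paper: pick a section $\mu$ with $\eta\circ\mu=\id_{X/{\equiv}}$, verify that $\mu$ is continuous via order-preservation and corollary \ref{eta preserves order}, and then exhibit an explicit ``step'' homotopy between $\mu\circ\eta$ and $\id_X$. The paper's $F$ swaps the roles of the two endpoints (identity on $[0,1)$, $\mu\circ\eta$ at $t=1$), but that is immaterial.

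The one substantive difference is your continuity check for $F$. The paper argues pointwise, showing $F(U_x\times I)\subseteq U_{F(x,s)}$ using the minimal open neighbourhoods $U_x$; this genuinely uses the Alexandrov-discrete hypothesis. You instead compute $F^{-1}(V)=V\times I$ directly from the fact that $x$ and $\mu(\eta(x))$ lie in exactly the same open sets. That computation nowhere invokes the Alexandrov-discrete assumption, and since the section $\mu\colon X/{\equiv}\to X$ is already known to be continuous for \emph{every} topological space (this follows from the earlier theorem that $X/{\equiv}$ is homeomorphic to a dense subspace of $X$ via $\mu$), your argument in fact establishes that $\eta$ is a homotopy equivalence for all spaces, not just Alexandrov-discrete ones. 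The paper's version, following McCord, is tailored to the Alexandrov setting where the order structure is the central object.
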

\begin{proof}
Let $\mu \colon X/{\equiv} \to X$ be a map that picks a representative from each equivalence class. Then $\eta \circ \mu = \id_{X/{\equiv}}$. By corollary \ref{eta preserves order}, $\mu$ is order-preserving and hence continuous.

We need to show that $\pi \colon X \to X$, $\pi = \mu \circ \eta$ is homotopic to $\id_X$. For all $x \in X$, $\eta(\pi(x)) = \eta(\mu(\eta(x))) = \eta(x)$. Hence 
\begin{equation} \label{hull of pi(x) is hull of x}
U_{\pi(x)} = U_x.
\end{equation}
Let $F \colon X \times I \to X$ be defined by
\[
F(x,t) =
\begin{cases}
x		& \text{if } 0 \leq t < 1, \\
\pi(x) 	& \text{if } t = 1.
\end{cases}
\]
To show that $F$ is continuous, let $(x,s) \in X \times I$. Now $U_x \times I$ is a neighbourhood of $(x,s)$. Let $(y,t) \in U_x \times I$. If $0 \leq t < 1$, then $F(y,t) = y \in U_x$. On the other hand, if $t = 1$, then $F(y,t) = \pi(y) \in U_{\pi(y)} = U_y \subseteq U_x$. By equation (\ref{hull of pi(x) is hull of x}), $U_{F(x,s)} = U_x$. Therefore, $F(U_x \times I) = U_x = U_{F(x,s)} \subseteq V$ for all $V \in \mathcal{N}(F(x,s))$. Thus $F$ is continuous at point $(x,s)$. Since $(x,s)$ was arbitrary, $F$ is continuous.
\end{proof}

\newpage

\end{document}